\newcommand{\R}{\mathbb{R}}
\newcommand{\N}{\mathbb{N}}
\newcommand{\sign}{\operatornamewithlimits{sign}}
\newcommand{\argmin}{\operatornamewithlimits{argmin}}
\newcommand{\xk}{x^{k}}
\newcommand{\xkk}{x^{k+1}}
\newcommand{\JpX}{\mathbf{J}_\mathcal{X}^{p}} 
\newcommand{\JqY}{\mathbf{J}_\mathcal{Y}^{q}}
\newcommand{\JpXstar}{\mathbf{J}_\mathcal{X^*}^{{p}^*}}
\newcommand{\lpn}{\ell^{(p_n)}} 
\newcommand{\lp}{{L^p(\Omega)}} 
\newcommand{\lpvar}{{L^{p(\cdot)}(\Omega)}} 
\newcommand{\rhopn}{\rho_{(p_n)}}
\newcommand{\rhobarpn}{\bar{\rho}_{(p_n)}}
\newcommand{\jrhopn}{\mathbf{J}_{\rhopn}}
\newcommand{\jrhobarpn}{\mathbf{J}_{\rhobarpn}}
\newcommand{\red}[1]{\textcolor{black}{\text{#1}}}
\newcommand{\showpic}[1]{%
\begin{tikzpicture}[spy using outlines={rectangle, magnification=2, width = 0.75cm, height = 0.75cm,
connect spies, red, thick}]%
\draw (0,0) node [anchor=north] {\includegraphics[height=0.22\textwidth]{#1}};%
\spy on (0.1cm,-1.4cm) in node at (0.62cm, -.58cm);
\end{tikzpicture}%
}
\begin{document}
\title{Stochastic gradient descent for linear inverse problems in variable exponent Lebesgue spaces 
}
\titlerunning{Stochastic gradient descent in variable exponent Lebesgue spaces}
%
%
%
%

\author{Marta Lazzaretti\inst{1,3} \and
Zeljko Kereta\inst{2} \and
Claudio Estatico\inst{1} \and 
Luca Calatroni\inst{3}}
\authorrunning{M. Lazzaretti, Z. Kereta, C. Estatico, L. Calatroni}
%
\institute{Dip. di Matematica, Universit\`a di Genova, Via Dodecaneso 35, 16146, Italy\\
\email{lazzaretti@dima.unige.it}, \email{estatico@dima.unige.it} \\
\and
Dept. of Computer Science, University College London, UK\\
\email{z.kereta@ucl.ac.uk} 
\and
CNRS, UCA, Inria, Laboratoire I3S, Sophia-Antipolis, 06903,  France \\
\email{calatroni@i3s.unice.fr}
}

\maketitle              

\begin{abstract}
We consider a stochastic gradient descent (SGD) algorithm for solving linear inverse problems (e.g., CT image reconstruction) in the Banach space framework of variable exponent Lebesgue spaces $\lpn(\mathbb{R})$. Such non-standard spaces have been recently proved to be the appropriate functional framework to enforce pixel-adaptive regularisation in signal and image processing applications. Compared to its use in Hilbert settings, however, the application of SGD in the Banach setting of $\lpn(\mathbb{R})$ is not straightforward, due, in particular to the lack of a closed-form expression and the non-separability property of the underlying norm.  In this manuscript, we show that SGD iterations can effectively be performed using the associated modular function. Numerical validation on both simulated and real CT data show significant improvements in comparison to   SGD solutions both in Hilbert and other Banach settings, in particular when non-Gaussian or mixed noise is observed in the data.

\keywords{Iterative regularisation \and Stochastic gradient descent  \and Inverse problems in Banach spaces \and Computed Tomography.}

\end{abstract}

\vspace{-0.2cm}
\section{Introduction}
The literature on iterative regularisation methods for solving ill-posed linear inverse problems in finite/infinite-dimensional Hilbert or Banach settings is very vast, see, e.g.,  \cite{Engl2000,Schuster2012} for surveys. Given two normed vector spaces $(\mathcal{X},\|\cdot\|_{\mathcal{X}})$ and $(\mathcal{Y},\|\cdot\|_{\mathcal{Y}})$, we are interested in the inverse problem
\begin{equation}  \label{eq:inv_prob}
\text{find}\quad x\in\mathcal{X}\quad\text{s.t.}\quad \mathcal{Y}\ni y=Ax+\eta,
\end{equation}
where $A\in\mathcal{L}(\mathcal{X};\mathcal{Y})$ is a bounded linear operator, and $\eta\in\mathcal{Y}$  denotes the (additive) noise perturbation of magnitude $\|\eta\|_{\mathcal{Y}}\le\delta$, $ \delta>0$, corrupting the measurements. 
Due to the ill-posedness, the standard strategy for solving \eqref{eq:inv_prob} consists in computing  $x^\star\in\argmin_{x\in\mathcal{X}} ~\Psi(x)$, where the functional $\Psi:\mathcal{X}\to \mathbb{R}_+\cup\{+\infty\}$ quantifies the fidelity of a candidate reconstruction to the measurements, possibly combined with a penalty or regularisation term enforcing prior assumptions on the sought quantity $x\in\mathcal{X}$. 
A popular strategy for promoting implicit regularisation through algorithmic optimisation consists in designing iterative schemes solving instances of the minimisation problem 
$\argmin_{x\in\mathcal{X}} ~ \| Ax-y \|_{\mathcal{Y}}~$
or, more generally
\begin{equation}  \label{eq:min_prob_general}
\tag{P}
\argmin_{x\in\mathcal{X}}~f(x)\qquad \mbox{with} \quad f(x)=\tilde f (Ax-y),
\end{equation}
where, for $y\in\mathcal{Y}$, the function $f(\cdot)=\tilde f(A \cdot - y):\mathcal{X}\to\R_{\ge 0}$ measures the discrepancy between the model observation $Ax$ and $y$. The iterative scheme has to be endowed with a robust criterion for its early stopping in order to avoid that the computed reconstruction overfits the noise \cite{Natterer86}. In this context, the role of the parameter tuning the amount of regularisation is thus played by nothing but the number of performed iterations. One-step gradient descent algorithms, such as the (accelerated) Landweber or the Conjugate Gradient, represent the main class of optimisation methods for the resolution of \eqref{eq:min_prob_general}, see e.g. \cite{Neubauer1988,Eicke1992,Piana1997}. 
  
The most well-studied cases consider $\mathcal{X}$ and $\mathcal{Y}$ to be Hilbert spaces, e.g., $\mathcal{X}=\mathcal{Y}=\ell^2(\R)$. In this setting, problem \eqref{eq:min_prob_general} takes the form
$
    \argmin_{x\in \ell^2(\R)} ~ \frac{1}{2}\|Ax-y\|^2_{\ell^2(\mathbb{R})}
$
and can be solved by a standard Landweber iterative scheme 
\begin{equation}  \label{eq:landweber}
\xkk=\xk-\mu_{k+1}A^*(A\xk-y),
\end{equation}
for $k\geq 0$, where $\mu_{k+1}>0$ denotes the algorithmic step-sizes. 
%
However, many inverse problems require a more complex setting to retrieve solutions with specific features, such as sharp edges, piecewise constancy, sparsity patterns and/or to model non-standard (e.g., mixed) noise in the data. Either $\mathcal{X}$ or $\mathcal{Y}$, or both, can thus be modelled as more general Banach spaces. Notable examples are standard Lebesgue spaces $\lp$ and, in discrete settings, sequence spaces $\ell^p(\mathbb{R})$ with $p\in [1,+\infty]\setminus \left\{2\right\}$. While the solution space $\mathcal{X}$ affects the choice of the specific iterative scheme to be used, the measurement (or data) space $\mathcal{Y}$ is naturally connected to the norm appearing in \eqref{eq:min_prob_general}.
For example, for Hilbert $\mathcal{X}=\ell^2(\mathbb{R})$ and Banach $\mathcal{Y}=\ell^p(\mathbb{R})$, an instance of \eqref{eq:min_prob_general} reads as
\begin{equation}   \label{eq:prob_banach}
    \argmin_{x\in\ell^2(\mathbb{R})} ~ \frac{1}{q}\|Ax-y\|^q_{\ell^p},\quad \mbox{with} \;\, q>1,
\end{equation}
for which a gradient descent-type scheme can still be used in the form  $\xkk=\xk- A^*\mathbf{J}_{\ell^p}^q(A\xk-y)$, where $\mathbf{J}_{\ell^p}^q:\ell^p(\mathbb{R}) \to \ell^{p^*}(\mathbb{R}) $ is the so-called $q$-duality map of $\ell^p(\mathbb{R})$, defined as $\mathbf{J}_{\ell^p}^q(\cdot)
= \partial\left(\frac{1}{{q}}\|\cdot\|_{\ell^p(\R)}^{{q}}\right)$. 
When both $\mathcal{X}$ and $\mathcal{Y}$ are Banach spaces, a popular algorithm for solving 
$$\argmin_{x\in\mathcal{X}} ~ \frac{1}{q}\|Ax-y\|^q_{\mathcal{Y}},\quad \mbox{with} \;\, q>1$$
is the dual Landweber method \cite{Schopfer2006}
\begin{equation}\label{eq:gd_banach}
\xkk=\JpXstar \left( \JpX(\xk)-\mu_{k+1}A^*\JqY(A\xk-y) \right)
\end{equation} 
where 
$\JpX: \mathcal{X} \to {\mathcal{X}^*}$, 
is the ${p}$-duality map of $\mathcal{X}$, $\mathbf{J}_\mathcal{X^*}^{p^*}: {\mathcal{X}^*} \to \mathcal{X}\,$ is its inverse with $p^*$ denoting the conjugate exponent of $p$, i.e.~$1/p + 1/p^* =1$. For other references of gradient-descent-type solvers in Banach settings, see, e.g.~\cite{Schopfer2006,Schuster2012,Jin_2012}.
%
%

A non-standard Banach framework for solving linear inverse problems is the one of variable exponent Lebesgue spaces $\lpvar$ and $\lpn(\mathbb{R})$ \cite{LpvarBOOK}.
These Banach spaces are defined in terms of a Lebesgue measurable function $p(\cdot):\Omega\rightarrow[1,+\infty]$, or a real sequence $(p_n)_n$, respectively, that assigns coordinate-wise exponents to all points in the domain.
Variable exponent Lebesgue spaces have proven useful in the design of adaptive regularisation, 
suited 
to model heterogeneous data  and complex noise settings. Iterative regularisation procedures in this setting have been recently studied \cite{Bonino23} and also extended to composite optimisation problems involving non-smooth penalty terms \cite{Lazzaretti_SISC22}.

While benefiting from several convergence properties, the use of such (deterministic) iterative algorithms may be prohibitively expensive in large-size applications as they require the use of all data at each iteration. In this work, we follow the strategy performed by the seminal work of Robbins and Monro \cite{Robbins1951} and adapt a stochastic gradient descent (SGD) strategy to the non-standard setting of variable exponent Lebesgue space, in order to reduce the per-iteration complexity costs. Roughly speaking, this is done by defining a suitable decomposition of the original problem and implementing an iterative scheme where only a batch of data, typically one, is used to compute the current update. Note that the use of SGD schemes has recently attracted the attention of the mathematical imaging community \cite{Jin2022,Kereta2023} due to its applicability in large-scale applications such as medical imaging \cite{Herman1993,Needell2015,Twyman2023}. However, its extension to variable exponent Lebesgue setting is not trivial due to some structural difficulties (e.g., non-separability of the norm), making the adaptation a challenging task.

\vspace{-0.2cm}
\paragraph{Contribution.} 
We consider an SGD-based iterative regularisation strategy for solving linear inverse problems in the non-standard Banach setting of variable exponent Lebesgue space $\lpn(\mathbb{R})$. To overcome the non-separability of the norm in such space, we consider updates defined in terms of a separable function, the modular function. Numerical investigation of the methodology on CT image reconstruction are reported to show the advantages of considering such non-standard Banach setting in comparison to standard Hilbert scenarios. Comparisons between the modular-based deterministic and stochastic algorithms confirm improvements of the latter w.r.t. CPU times.

\vspace{-0.2cm}
\section{Optimisation in Banach spaces}
\vspace{-0.15cm}
In this section we revise the main definitions and tools useful for solving a general instance of \eqref{eq:min_prob_general} in the general context of Banach spaces $\mathcal{X}$ and $\mathcal{Y}$.
For a real Banach space $(\mathcal{X},\|\cdot\|_{\mathcal{X}})$, we denote by $(\mathcal{X}^*, \|\cdot\|_\mathcal{X^*})$ its dual space and, for any $x\in\mathcal{X}$ and $x^*\in\mathcal{X}^*$,  by $\langle x^*,x\rangle=x^*(x)\in\R$ its duality pairing.

The following definition is crucial for the development of algorithms solving \eqref{eq:min_prob_general} in Banach spaces.  We recall that in Hilbert settings $\mathcal{H}\cong\mathcal{H}^*$ holds by the Riesz representation theorem, with $\cong$ denoting an isometric isomorphism. Hence, for $x\in\mathcal{H}$, the element $\nabla f(x) \in \mathcal{H}^*$ can be implicitly identified with a unique element in $\mathcal{H}$ itself, up to
the canonical isometric isomorphism, so that the design of gradient-type schemes is significantly simplified, as in \eqref{eq:landweber}. Since the same identification does not hold, in general, for a Banach space $\mathcal{X}$, we recall the notion of duality maps, which properly associate an element of $\mathcal{X}$ with an element  (or a subset) of $\mathcal{X}^*$ \cite{Cioranescu1990}. 

\begin{definition}
\label{def:map_dual}
Let $\mathcal{X}$ be a Banach space and $p>1$. The duality map $\JpX$ with gauge function $t\mapsto t^{p-1}$ is the operator $\JpX:\mathcal{X}\to 2^{\mathcal{X}^*}$ such that, for all $x\in\mathcal{X}$,
\[
\JpX(x)=\big\{x^{*}\in \mathcal{X}^{*}:
\left<x^*,x \right>=\|x\|_\mathcal{X} \|x^{*}\|_{\mathcal{X}^{*}}, \, \|x^{*}\|_{\mathcal{X}^{*}}=\|x\|_\mathcal{X}^{p-1}\big\}.
\]
\end{definition}

Under suitable smoothness assumptions on $\mathcal{X}$ \cite{Schuster2012}, $\JpX(x)$ is single valued at all $x\in\mathcal{X}$.
For instance, for $\mathcal{X}=\ell^p(\mathbb{R})$, with $p>1$, 
all duality maps are single-valued. 
The following Theorem (see  \cite{Cioranescu1990}) provides an operative definition and a more intuitive interpretation of the duality maps.  
\begin{theorem}[Asplund's Theorem]
\label{Theo:Asplund}
The duality map $\JpX$ is the subdifferential of the convex functional $h:x \mapsto \frac{1}{p}\|x\|_\mathcal{X}^p$, that is, $\JpX(\cdot) = \partial (\frac{1}{p}\| \cdot \|_\mathcal{X}^p)$.
\end{theorem}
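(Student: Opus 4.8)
The plan is to characterise the subdifferential of $h:x\mapsto\frac1p\|x\|_\mathcal{X}^p$ through the Fenchel--Young equality and then to match the resulting conditions with the two defining properties of $\JpX(x)$ in Definition~\ref{def:map_dual}. Recall that, for a proper convex lower semicontinuous functional, $x^*\in\partial h(x)$ holds if and only if $h(x)+h^*(x^*)=\langle x^*,x\rangle$, where $h^*$ denotes the Fenchel conjugate of $h$; so the first task is to compute $h^*$.

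To evaluate $h^*(x^*)=\sup_{x\in\mathcal{X}}\bigl(\langle x^*,x\rangle-\tfrac1p\|x\|_\mathcal{X}^p\bigr)$ I would exploit the positive homogeneity of the norm: writing $x=t u$ with $t=\|x\|_\mathcal{X}\ge0$ and $\|u\|_\mathcal{X}=1$, and using $\sup_{\|u\|_\mathcal{X}=1}\langle x^*,u\rangle=\|x^*\|_{\mathcal{X}^*}$ (the definition of the dual norm, via Hahn--Banach), the supremum collapses to the scalar problem $\sup_{t\ge0}\bigl(t\,\|x^*\|_{\mathcal{X}^*}-t^p/p\bigr)$. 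Its value is $\frac{1}{p^*}\|x^*\|_{\mathcal{X}^*}^{p^*}$, attained at $t=\|x^*\|_{\mathcal{X}^*}^{1/(p-1)}$ and using $p/(p-1)=p^*$. Hence $h^*(x^*)=\frac{1}{p^*}\|x^*\|_{\mathcal{X}^*}^{p^*}$.

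With this conjugate, $x^*\in\partial h(x)$ becomes the equation $\frac1p\|x\|_\mathcal{X}^p+\frac{1}{p^*}\|x^*\|_{\mathcal{X}^*}^{p^*}=\langle x^*,x\rangle$. I would then invoke the chain of estimates
\begin{equation}
\langle x^*,x\rangle\le\|x^*\|_{\mathcal{X}^*}\|x\|_\mathcal{X}\le\tfrac1p\|x\|_\mathcal{X}^p+\tfrac{1}{p^*}\|x^*\|_{\mathcal{X}^*}^{p^*},
\end{equation}
where the first inequality is the definition of the dual norm and the second is Young's inequality $ab\le a^p/p+b^{p^*}/p^*$. The Fenchel--Young equality forces equality in both: equality in the first gives $\langle x^*,x\rangle=\|x\|_\mathcal{X}\|x^*\|_{\mathcal{X}^*}$, while equality in Young's inequality gives $\|x\|_\mathcal{X}^p=\|x^*\|_{\mathcal{X}^*}^{p^*}$, i.e.\ $\|x^*\|_{\mathcal{X}^*}=\|x\|_\mathcal{X}^{p-1}$ since $p/p^*=p-1$. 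These are precisely the two conditions in Definition~\ref{def:map_dual}. Conversely, if both conditions hold, both inequalities above become equalities, hence so does Fenchel--Young, and $x^*\in\partial h(x)$. The degenerate case $x=0$ is checked directly: the superlinear growth of $h$ near the origin ($p>1$) gives $\partial h(0)=\{0\}$, which also coincides with $\JpX(0)$ by Definition~\ref{def:map_dual}.

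The main obstacle is the rigorous computation of $h^*$ --- justifying the reduction to a one-dimensional optimisation via homogeneity, together with the identity $\sup_{\|u\|_\mathcal{X}=1}\langle x^*,u\rangle=\|x^*\|_{\mathcal{X}^*}$ --- and the careful handling of the equality case in Young's inequality; the remainder is routine bookkeeping with the conjugate exponents $p$ and $p^*$. An alternative route, avoiding conjugates, would use the subdifferential chain rule for the increasing convex outer function $t\mapsto t^p/p$ composed with the norm, together with the classical description $\partial\|\cdot\|_\mathcal{X}(x)=\{x^*:\|x^*\|_{\mathcal{X}^*}=1,\ \langle x^*,x\rangle=\|x\|_\mathcal{X}\}$ valid for $x\neq0$; the amount of work is comparable.
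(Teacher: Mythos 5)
Your argument is correct: the identification of $\partial\bigl(\tfrac1p\|\cdot\|_\mathcal{X}^p\bigr)$ via the conjugate $h^*(x^*)=\tfrac{1}{p^*}\|x^*\|_{\mathcal{X}^*}^{p^*}$, the Fenchel--Young equality, and the equality cases in the dual-norm pairing and in Young's inequality exactly reproduces the two defining conditions of $\JpX(x)$ in Definition~\ref{def:map_dual}, and the case $x=0$ is handled (indeed it already follows from the equality $\tfrac{1}{p^*}\|x^*\|_{\mathcal{X}^*}^{p^*}=0$). Note that the paper does not prove this statement at all --- it is quoted as Asplund's classical theorem with a citation to Cioranescu --- and your Fenchel-conjugate argument is essentially the standard textbook proof of that cited result, so there is no divergence to report.
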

\red{The following result is needed for the invertibility of the duality map.}
\begin{proposition}\cite{Schuster2012}
    Under suitable smoothness and convexity conditions on $\mathcal{X}$ and for $p>1$, for all $x\in\mathcal{X}$ and all $x^*\in\mathcal{X}^*$, there holds
    \begin{equation}   \label{pro:inverse_dual_map}
        \JpXstar(\JpX(x))=x\,, \quad \quad\JpX(\JpXstar(x^*))=x^*.
    \end{equation}
\end{proposition}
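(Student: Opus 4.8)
The plan is to derive the two identities in \eqref{pro:inverse_dual_map} from Asplund's Theorem combined with the standard Fenchel duality between a convex function and its conjugate. By Theorem~\ref{Theo:Asplund} we write $\JpX = \partial h$ with $h(x) = \frac{1}{p}\|x\|_\mathcal{X}^p$. A one-dimensional computation (optimising $t\mapsto t\,\|x^*\|_{\mathcal{X}^*} - \frac1p t^p$ and using the definition of the dual norm) shows that the Fenchel conjugate is $h^*(x^*) = \frac{1}{p^*}\|x^*\|_{\mathcal{X}^*}^{p^*}$, where $p^*$ is the conjugate exponent of $p$. Applying Asplund's Theorem in $\mathcal{X}^*$ then identifies $\partial h^* = \JpXstar$, the duality map of $\mathcal{X}^*$ with gauge $t\mapsto t^{p^*-1}$.

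Next I would invoke the Fenchel--Young equivalence: for a proper, convex, lower semicontinuous $h$ one has, for all $x\in\mathcal{X}$ and $x^*\in\mathcal{X}^*$,
\[
x^* \in \partial h(x) \iff \langle x^*,x\rangle = h(x) + h^*(x^*) \iff x \in \partial h^*(x^*),
\]
where the last equivalence uses reflexivity of $\mathcal{X}$ (so that $\mathcal{X}^{**}\cong\mathcal{X}$ and $h^{**}=h$). This already establishes that $\JpX$ and $\JpXstar$ are inverse to one another as set-valued maps.

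Finally, to upgrade this to the pointwise equalities \eqref{pro:inverse_dual_map} I would use the ``suitable smoothness and convexity conditions'': smoothness of $\mathcal{X}$ (Gâteaux differentiability of the norm away from the origin) makes $h$ Gâteaux differentiable, hence $\JpX = \partial h$ is single-valued; strict convexity of $\mathcal{X}$ --- equivalently, in the reflexive case, smoothness of $\mathcal{X}^*$ --- makes $h^*$ Gâteaux differentiable, hence $\JpXstar = \partial h^*$ is single-valued. With both maps single-valued and mutually inverse in the set-valued sense, the compositions $\JpXstar\circ\JpX$ and $\JpX\circ\JpXstar$ collapse to the identity on $\mathcal{X}$ and on $\mathcal{X}^*$, respectively, which is exactly \eqref{pro:inverse_dual_map}.

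\textbf{Main obstacle.} I expect the delicate point to be the bookkeeping of the exact hypotheses rather than any hard estimate: one must make sure that reflexivity (for $h^{**}=h$ and the bidual identification), smoothness of $\mathcal{X}$ (single-valuedness of $\JpX$) and strict convexity of $\mathcal{X}$ (single-valuedness of $\JpXstar$) are all simultaneously in force, and one must verify carefully that the conjugate of $\frac1p\|\cdot\|_\mathcal{X}^p$ is indeed $\frac{1}{p^*}\|\cdot\|_{\mathcal{X}^*}^{p^*}$ --- i.e.\ that the scalar supremum and the dual-norm identity combine cleanly --- rather than producing a more subtle object. Everything else is a routine application of convex analysis.
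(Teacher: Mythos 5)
The paper never proves this proposition — it is imported verbatim from \cite{Schuster2012} — and your argument reconstructs exactly the standard proof given in that reference: Asplund's theorem, the conjugate identity $\bigl(\tfrac1p\|\cdot\|_{\mathcal{X}}^p\bigr)^* = \tfrac{1}{p^*}\|\cdot\|_{\mathcal{X}^*}^{p^*}$, the Fenchel--Young equivalence (using $h^{**}=h$ and the bidual identification under reflexivity), and single-valuedness of $\JpX$ and $\JpXstar$ from smoothness and strict convexity. Your reading of the ``suitable smoothness and convexity conditions'' as reflexivity, smoothness and strict convexity of $\mathcal{X}$ is precisely what the cited result assumes, so the proposal is correct and takes essentially the same route as the source the paper relies on.
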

We notice that, if the gradient term $A^*\JqY(A\xk-y)$ vanishes in iteration \eqref{eq:gd_banach}, then $\xkk=\JpXstar(\JpX(\xk))=\xk$ by Proposition \ref{pro:inverse_dual_map}. 

For any $p,r>1$ and for any $x,h\in\ell^p(\mathbb{R})$, the explicit formula for $\mathbf{J}_{\ell^p}^{r}$ is 
\begin{equation}  \label{eq:duality_map_banach}
\langle\mathbf{J}_{\ell^p}^{r}(x),h\rangle=\|x\|_{p}^{r-p}\sum_{n\in\N} \sign(x_n)|x_n|^{p-1}h_n.
\end{equation}
Moreover, since $\left(\ell^p(\mathbb{R})\right)^* \cong \ell^{p^*}(\mathbb{R})$, then the inverse of the $r$-duality map $\mathbf{J}_{\ell^p}^{r}$ is nothing but $\left(\mathbf{J}_{\ell^p}^{r}\right)^{-1}=\mathbf{J}_{(\ell^p)^*}^{r^*}=\mathbf{J}_{\ell^{p^*}}^{r^*}$. 
Hence, the explicit analytical expression of its inverse $\left(\mathbf{J}_{\ell^p}^{r}\right)^{-1}=\mathbf{J}_{\ell^{p^*}}^{r^*}$ is also known \cite{Cioranescu1990}.
\vspace{-0.2cm}
\subsection{Variable exponent Lebesgue spaces $\lpn(\mathbb{R})$}
\vspace{-0.15cm}
 In the following, we will introduce the main concepts and definitions on the variable exponent Lebesgue spaces in the discrete setting of $\lpn(\mathbb{R})$. For surveys, we refer the reader to \cite{LpvarBOOK,CruzUribeFiorenzaBOOK}.  We define a family $\mathcal{P}$ of variable exponents as
\[
\footnotesize{\mathcal{P} :=\left\{ (p_n)_{n\in\N} \subset \R :   1 < p_-:=\inf_{n\in\N} p_n \leq p_+:=\sup_{n\in\N}p_n<+\infty\right\}. }
\]
\vspace{-0.2cm}
\begin{definition}
For $(p_n)_{n\in\N}\in\mathcal{P}$ and any real sequence $x=(x_n)_{n \in \N}$, 
\vspace{-0.2cm}
\begin{equation}
\rhopn(x):=\sum_{n\in\N} \vert x_n\vert ^{p_n} \;\quad \mbox{and} \qquad \rhobarpn(x):=\sum_{n\in\N} \frac{1}{p_n}\vert x_n\vert ^{p_n}
\label{eq:modular_rho_bar}
\vspace{-0.1cm}
\end{equation}
are called modular functions associated with the exponent map $(p_n)_{n\in\mathbb{N}}$.
\label{def:modular}
\end{definition}
\vspace{-0.2cm}
\begin{definition}
\label{def:lux_norm_lp_discr}
The Banach space $\lpn(\mathbb{R})$ is the set of real sequences $x=(x_n)_{n\in\N}$ such that $\rhopn\left(\frac{x}{\lambda}\right)<1$ for some $\lambda>0$. For any $x=(x_n)_{n\in\N}\in \lpn(\mathbb{R})$, the (Luxemburg) norm on $\lpn(\mathbb{R})$ is defined as 
\begin{equation}  \label{eq:lux_norm}
    \|x\|_{\lpn}:=\inf \left\{\lambda>0:\ \rhopn\left(\frac{x}{\lambda}\right) \le 1\right\}.
\end{equation}
\end{definition}

We now report a result from \cite{Bonino23} where a characterisation of the duality map $ \mathbf{J}^r_{\lpn}$ is given, in relation with \eqref{eq:duality_map_banach}.
\begin{theorem}
Given $(p_n)_{n\in\N}\in\mathcal{P}$,  then 
for each $x=\left(x_n\right)_{n\in\N}\in\lpn(\mathbb{R})$ and for any $r>1$, the duality map $\mathbf{J}^r_{\lpn}(x):\lpn(\mathbb{R})\to(\lpn)^*(\mathbb{R})$ is the linear operator defined, for  all $h=(h_n)_{n\in\N}\in\lpn(\mathbb{R})$ by:
\begin{equation}
    \langle \mathbf{J}^r_{\lpn}(x), h\rangle = \frac{1}{\sum_{n\in\N}\frac{p_n \vert x_n\vert^{p_n}}{\|x\|_{\lpn}^{p_n}}} \sum_{n\in\N}\frac{p_n \sign(x_n)\vert x_n\vert^{p_n-1}}{\|x\|_{\lpn}^{p_n-r}}h_n.
    \label{eq:Jlpvar_succ}
\end{equation}
\end{theorem}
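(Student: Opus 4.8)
The plan is to combine Asplund's Theorem with an implicit differentiation of the defining relation of the Luxemburg norm. Under the smoothness and convexity hypotheses on $\lpn(\mathbb{R})$ that make the duality map single valued, Theorem~\ref{Theo:Asplund} identifies $\mathbf{J}^r_{\lpn}(x)$ with the G\^ateaux derivative at $x$ of the convex functional $x\mapsto \tfrac{1}{r}\|x\|_{\lpn}^r$. Writing $N(x):=\|x\|_{\lpn}$ and using the chain rule, one gets $\mathbf{J}^r_{\lpn}(x) = N(x)^{r-1}\,N'(x)$ for $x\neq 0$, so the task reduces to computing the G\^ateaux derivative of the Luxemburg norm itself; the case $x=0$ is immediate, since $\mathbf{J}^r_{\lpn}(0)=0$ and the right-hand side of \eqref{eq:Jlpvar_succ} is understood to vanish there.

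To obtain $N'(x)$ for $x\neq 0$, I would exploit that, by Definition~\ref{def:lux_norm_lp_discr} together with continuity and strict monotonicity of $\lambda\mapsto\rhopn(x/\lambda)$ on $(0,+\infty)$, the value $\lambda=N(x)$ is the unique positive solution of
\[
G(x,\lambda):=\rhopn\!\big(x/\lambda\big)-1=\sum_{n\in\N}|x_n|^{p_n}\lambda^{-p_n}-1=0 .
\]
Differentiating this identity along a direction $h=(h_n)_n$ (equivalently, applying the implicit function theorem to $G$) yields
\begin{equation*}
\sum_{n\in\N} p_n\sign(x_n)|x_n|^{p_n-1}\lambda^{-p_n}\,h_n \;-\; \Big(\sum_{n\in\N} p_n|x_n|^{p_n}\lambda^{-p_n-1}\Big)\,N'(x)[h]=0,
\end{equation*}
and, since the coefficient of $N'(x)[h]$ is strictly negative for $x\neq 0$, solving for $N'(x)[h]$ and multiplying by $N(x)^{r-1}=\lambda^{r-1}$ gives
\begin{equation*}
\big\langle\mathbf{J}^r_{\lpn}(x),h\big\rangle=\lambda^{r-1}N'(x)[h]
=\frac{\sum_{n\in\N} p_n\sign(x_n)|x_n|^{p_n-1}\lambda^{r-p_n}\,h_n}{\sum_{n\in\N} p_n|x_n|^{p_n}\lambda^{-p_n}} ,
\end{equation*}
which, recalling $\lambda=\|x\|_{\lpn}$ and $\lambda^{r-p_n}=\|x\|_{\lpn}^{-(p_n-r)}$, is exactly formula \eqref{eq:Jlpvar_succ}; linearity of the functional in $h$ is evident.

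The delicate step, and the one I expect to be the main obstacle, is the rigorous justification of the termwise differentiation of the series $\rhopn(x/\lambda)$ with respect to both $\lambda$ and $x$, and of the applicability of the implicit function theorem on the sequence space $\lpn(\mathbb{R})$ rather than merely formally. This is precisely where the defining constraints of the class $\mathcal{P}$, namely $1<p_-\le p_+<+\infty$, enter: the lower bound $p_->1$ makes each map $t\mapsto|t|^{p_n}$ of class $C^1$ with derivative vanishing at $0$, so the $x$-derivatives are well defined and continuous even where some $x_n=0$; and the two-sided bound on the exponents supplies the uniform summable majorants (via a Weierstrass, respectively dominated-convergence, argument on a neighbourhood of $\lambda=\|x\|_{\lpn}$) needed to interchange differentiation and summation, and guarantees that $\partial_\lambda G(x,\lambda)=-\sum_{n\in\N} p_n|x_n|^{p_n}\lambda^{-p_n-1}$ is finite and strictly negative for $x\neq 0$, so that $\lambda$ depends differentiably on $x$. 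Once these analytic facts are in place the computation above closes; as a consistency check, one may verify directly from \eqref{eq:Jlpvar_succ} that $\langle\mathbf{J}^r_{\lpn}(x),x\rangle=\|x\|_{\lpn}^r$, recovering one of the two defining identities of the $r$-duality map in Definition~\ref{def:map_dual}.
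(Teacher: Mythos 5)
Your derivation is correct: the paper itself states this theorem without proof (it is quoted from \cite{Bonino23}), and the route you take --- Asplund's identification $\mathbf{J}^r_{\lpn}=\partial\big(\tfrac1r\|\cdot\|_{\lpn}^r\big)$ plus implicit differentiation of the defining identity $\rhopn(x/\lambda)=1$ at $\lambda=\|x\|_{\lpn}$ --- is essentially the standard argument used in that reference, and your algebra reproduces \eqref{eq:Jlpvar_succ} exactly (your consistency check $\langle\mathbf{J}^r_{\lpn}(x),x\rangle=\|x\|_{\lpn}^r$ is also right). The analytic caveats you flag (Gateaux differentiability of the Luxemburg norm, termwise differentiation and the implicit function theorem) are genuine but are exactly what the standing assumption $1<p_-\le p_+<+\infty$ delivers, so the sketch closes as you describe.
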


By \eqref{eq:lux_norm}, we note that $\|\cdot\|_{\lpn}$ is not separable as its computation requires the solution of a minimisation problem involving all elements $x_n$ and $p_n$ at the same time. As a consequence, the expression \eqref{eq:Jlpvar_succ} is not suited to be used in a computational optimisation framework. The following result from \cite{Lazzaretti_SISC22} provides more flexible expressions associated to the modular functions \eqref{eq:modular_rho_bar}.
\begin{proposition}
The functions $\rhopn$ and $\rhobarpn$ in \eqref{eq:modular_rho_bar} are Gateaux differentiable at any $x=\left(x_n\right)_{n\in\N}\in\lpn(\mathbb{R})$. For $h=(h_n)_{n\in\N}\in\lpn(\mathbb{R})$ their derivatives read
\begin{equation}
\footnotesize{
\langle\jrhopn(x),h\rangle=\sum_{n\in\N} p_n\sign(x_n)|x_n|^{p_n-1}h_n,\quad 
\langle\jrhobarpn(x),h\rangle=\sum_{n\in\N} \sign(x_n)|x_n|^{p_n-1}h_n.
\label{eq:jrhobarpn}
}
\end{equation}
\end{proposition}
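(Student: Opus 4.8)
The plan is to reduce both claims to one computation and then handle the single genuine subtlety, namely exchanging a limit with an infinite sum. Note first that $\rhopn$ and $\rhobarpn$ have the same form up to the bounded positive weights $1/p_n\in[1/p_+,\,1/p_-]$, so it suffices to differentiate a generic modular $\rho(x)=\sum_{n\in\N} c_n|x_n|^{p_n}$ with $0<c_-\le c_n\le c_+<\infty$ and then read off the two formulas by taking $c_n\equiv 1$ (for $\jrhopn$) and $c_n=1/p_n$ (for $\jrhobarpn$).

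First I would record the two elementary ingredients. On the scalar level, for each fixed $n$ the map $s\mapsto|s|^{p_n}$ is $C^1$ on $\R$ because $p_n>1$, with derivative $p_n\sign(s)|s|^{p_n-1}$, which is continuous and vanishes at $s=0$; hence the termwise derivative of $\rho$ at $x$ in direction $h$ ought to be $c_np_n\sign(x_n)|x_n|^{p_n-1}h_n$. On the sequence level I would use that $p_+<\infty$ makes membership in $\lpn(\R)$ equivalent to finiteness of the modular: if $\rhopn(x/\lambda)\le 1$ for some $\lambda\ge 1$, then $\rhopn(x)=\sum_n|x_n|^{p_n}\le\lambda^{p_+}\rhopn(x/\lambda)<\infty$, and similarly $\rhopn(h)<\infty$ for $x,h\in\lpn(\R)$.

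Next I would verify that the candidate derivative is a well-defined bounded linear functional on $\lpn(\R)$. Absolute convergence of $\sum_n c_np_n\sign(x_n)|x_n|^{p_n-1}h_n$ follows from Young's inequality with conjugate exponents $p_n$ and $p_n/(p_n-1)$,
\[
p_n|x_n|^{p_n-1}|h_n|\ \le\ (p_n-1)|x_n|^{p_n}+|h_n|^{p_n}\ \le\ p_+|x_n|^{p_n}+|h_n|^{p_n},
\]
which is summable by the previous step; linearity in $h$ is immediate, and boundedness on $\lpn(\R)$ is the variable-exponent Hölder inequality (the sequence $n\mapsto p_n\sign(x_n)|x_n|^{p_n-1}$ lies in the conjugate space, since raising it to the power $p_n/(p_n-1)$ produces a bounded multiple of $|x_n|^{p_n}$). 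The same estimate will also dominate the difference quotients.

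The remaining and, I expect, only real obstacle is to justify passing the limit inside the sum in
\[
\lim_{t\to 0}\ \sum_{n\in\N} c_n\,\frac{|x_n+th_n|^{p_n}-|x_n|^{p_n}}{t}\ =\ \sum_{n\in\N} c_n\,p_n\sign(x_n)|x_n|^{p_n-1}h_n,
\]
which I would do by dominated convergence for series. By the mean value theorem the $n$-th summand equals $c_np_n\sign(x_n+\theta_nth_n)|x_n+\theta_nth_n|^{p_n-1}h_n$ for some $\theta_n=\theta_n(t)\in(0,1)$; for $|t|\le 1$ one has $|x_n+\theta_nth_n|\le|x_n|+|h_n|$, and then $(a+b)^{p_n-1}\le 2^{p_+}(a^{p_n-1}+b^{p_n-1})$ together with the Young estimate above bounds the summand, uniformly in $|t|\le 1$, by a constant multiple of the summable sequence $|x_n|^{p_n}+|h_n|^{p_n}$. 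Pointwise convergence of each summand as $t\to 0$ to $c_np_n\sign(x_n)|x_n|^{p_n-1}h_n$ is exactly the scalar $C^1$ fact from the first step (valid also when $x_n=0$, the derivative being $0$ there). Specialising $c_n\equiv 1$ and $c_n=1/p_n$ then yields the two displayed formulas for $\jrhopn$ and $\jrhobarpn$, completing the argument.
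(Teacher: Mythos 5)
Your argument is correct. Note that the paper itself offers no proof of this proposition --- it is quoted from the cited reference --- so there is nothing to match line by line; your route (reduce both modulars to a weighted modular, use that $p_+<\infty$ makes the modulars finite on $\lpn(\mathbb{R})$, differentiate termwise, and justify the interchange of limit and sum by the mean value theorem plus a Young-inequality dominating function that is uniform in $|t|\le 1$) is exactly the standard argument one would expect, and every step checks out, including the Young estimate $p_n|x_n|^{p_n-1}|h_n|\le (p_n-1)|x_n|^{p_n}+|h_n|^{p_n}$ and the reduction to $\lambda\ge 1$ when bounding $\rhopn(x)$ by $\lambda^{p_+}\rhopn(x/\lambda)$. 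Your remark on boundedness is also the right one: the candidate derivative lies in $\ell^{(p_n^*)}(\mathbb{R})$, and the variable-exponent H\"older inequality gives continuity of the functional, which only needs the (non-isometric) embedding of $\ell^{(p_n^*)}(\mathbb{R})$ into $(\lpn)^*(\mathbb{R})$, not an isometric duality.
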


Notice that, although $\jrhopn$ and $\jrhobarpn$ are formally not duality maps, we adopt the same notation for the sake of consistency with Asplund Theorem \ref{Theo:Asplund}.

\vspace{-0.2cm}
\section{Modular-based gradient descent in $\lpn(\mathbb{R})$}
\vspace{-0.15cm}
\label{sec3}
Given $(p_n)_{n\in\mathbb{N}}, (q_n)_{n\in\mathbb{N}}\in\mathcal{P}$, we now discuss how to implement a deterministic gradient-descent (GD) type algorithm for solving an instance of \eqref{eq:min_prob_general} with $\mathcal{X}=\lpn(\mathbb{R})$ and $\mathcal{Y}=\ell^{(q_n)}(\mathbb{R})$. Recalling \eqref{eq:gd_banach}, GD iterations in this setting require knowing the duality map $\mathbf{J}^r_{\lpn}$ and its inverse.
However, as shown in \cite[Corollary 3.2.14]{LpvarBOOK}, such an inverse does not directly relate to the point-wise conjugate exponents of $(p_n)_{n\in\mathbb{N}}$ as the isomorphism between $(\lpn)^*(\mathbb{R})$ and $\ell^{(p_n^*)}(\mathbb{R})$ -differing from the standard $\ell^p$ constant case- is not isometric. As discussed in \cite{Bonino23}, the approximation 
$\left(\mathbf{J}^r_{\lpn}\right)^{-1}=\mathbf{J}^{r^*}_{(\lpn)^*}\approx \mathbf{J}^{r^*}_{\ell^{(p_n^*)}}$
can be used as an inexact (but explicit) formula for computing the duality map of  $(\lpn)^*(\mathbb{R})$. Under this assumption, the dual Landweber method can thus be used to solve the minimisation problem $\argmin_{x\in \lpn(\mathbb{R})}~\frac{1}{q}\|Ax-y\|_{\ell^{(q_n)}}^q, \quad q>1.$ 
Note, however, that the computation of the duality map $\mathbf{J}^p_{\lpn}$ requires the computation of $\|x\|_{\lpn}$ which, as previously discussed, makes the iterative scheme rather inefficient in terms of computational time. We thus follow \cite{Lazzaretti_SISC22} and define in Algorithm \ref{alg_modular_gd} a more efficient modular-based gradient descent iteration for the resolution of \eqref{eq:min_prob_general} in the general setting of variable exponent Lebesgue spaces.  The following set of assumptions needs to hold:
\begin{enumerate}[label=\textbf{A.\arabic*}]
    \item \label{assump1} $\nabla f : \lpn(\mathbb{R}) \to(\lpn)^*(\mathbb{R})$ is $(\textit{\texttt{p}} -1)-$H{\"o}lder-continuous with exponent $1<\textit{\texttt{p}}\le2$ and constant $K>0$.
    \item \label{assump2} There exists $c>0$ such that, for all $u,v\in\lpn(\mathbb{R})\,$,
\small{
\[
\langle \jrhobarpn(u)-\jrhobarpn(v), u-v\rangle \ge c \max \left\{ \|u-v\|_{\lpn}^\texttt{p}, \|\jrhobarpn(u)-\jrhobarpn(v)\|_{(\lpn)^*}^{\texttt{p}^*}\right\} .
\label{eq:SGD_mod_based_it}
\]}
\end{enumerate}

The latter bound was previously used in \cite{GuanSong2015,Lazzaretti_SISC22}. It is a compatibility condition between the ambient space $\lpn(\mathbb{R})$ and the H{\"o}lder smoothness properties of the residual function to minimise to achieve algorithmic convergence. 

The minimisation of the specific function $f$ of \eqref{eq:min_prob_general} is achieved solving at each iteration \eqref{eq:GD_var} the following minimisation problem:  
\begin{equation*}
    \xkk = \argmin_{u\in\lpn(\mathbb{R})} \rhobarpn(u)-\langle \jrhobarpn(\xk), u\rangle + \mu_k \langle \nabla f(\xk),u\rangle.
\end{equation*}

\begin{algorithm}[t!]
\caption{Modular-based Gradient Descent in $\lpn(\mathbb{R})$}
\label{alg_modular_gd}
\textbf{Parameters:} $\{\mu_k\}_k$ s.t. $
0<\Bar{\mu}\le \mu_k \le \frac{\textit{\texttt{p}}c(1-\delta)}{K}$ with $0<\delta<1$, for all $k\geq 0.$\\
\textbf{Initialisation:} $x^0\in\lpn(\mathbb{R})$.\\
{
\textbf{repeat}
\begin{equation}
   \footnotesize{ \xkk = \vert \jrhobarpn(\xk)-\mu_k\nabla f(\xk)\vert ^{\frac{1}{p_n-1}}\sign{(\jrhobarpn(\xk)-\mu_k\nabla f(\xk))}}
    \label{eq:GD_var}
\end{equation}
\textbf{until} convergence
}
\end{algorithm}

The following proof shows that the functional $\jrhobarpn$ defined by \eqref{eq:jrhobarpn} is invertible and gives a point-wise characterisation of its inverse.
\begin{proposition}
The functional $\jrhobarpn$ in \eqref{eq:jrhobarpn} is invertible. For all $v\in(\lpn)^*(\mathbb{R})$, 
\begin{equation}
     (\jrhobarpn)^{-1}(v)=
    \Big(|v_n|^{\frac{1}{p_n-1}}\sign(v_n)\Big)_{n\in\N} \in \lpn(\mathbb{R}).
\end{equation}
\end{proposition}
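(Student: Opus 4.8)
The plan is to exploit that $\jrhobarpn$ acts coordinatewise. By \eqref{eq:jrhobarpn}, identifying $v\in(\lpn)^*(\mathbb{R})$ with its coordinate sequence $v_n=\langle v,e_n\rangle$ (where $e_n\in\lpn(\mathbb{R})$ is the $n$-th unit sequence), we have $\jrhobarpn(x)=\big(\sign(x_n)|x_n|^{p_n-1}\big)_{n\in\N}$. For each fixed $n$ the scalar map $\phi_n:t\mapsto\sign(t)|t|^{p_n-1}$ is a strictly increasing continuous bijection of $\R$ onto itself, with inverse $\psi_n:s\mapsto\sign(s)|s|^{1/(p_n-1)}$; note also the elementary identity $\frac{1}{p_n-1}=p_n^*-1$. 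The task is therefore to upgrade these scalar bijections to a bijection between $\lpn(\mathbb{R})$ and $(\lpn)^*(\mathbb{R})$, the only delicate point being that the coordinatewise inverse of a dual element must land back in $\lpn(\mathbb{R})$.

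For injectivity: if $\jrhobarpn(x)=\jrhobarpn(y)$, evaluating both functionals at $e_m\in\lpn(\mathbb{R})$ annihilates all terms but one and gives $\phi_m(x_m)=\phi_m(y_m)$, hence $x_m=y_m$ for every $m$, so $x=y$. For surjectivity (and the explicit formula): fix $v\in(\lpn)^*(\mathbb{R})$, put $v_n=\langle v,e_n\rangle$, and set $x=(x_n)_{n\in\N}$ with $x_n=\psi_n(v_n)=\sign(v_n)|v_n|^{1/(p_n-1)}$. Granting $x\in\lpn(\mathbb{R})$ (dealt with below), for any $h=(h_n)_{n\in\N}\in\lpn(\mathbb{R})$ formula \eqref{eq:jrhobarpn} yields $\langle\jrhobarpn(x),h\rangle=\sum_n\phi_n(x_n)h_n=\sum_n v_n h_n$; since finitely supported sequences are dense in $\lpn(\mathbb{R})$ (because $p_+<+\infty$) and $v$ is continuous, one also has $\langle v,h\rangle=\sum_n v_n h_n$. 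Hence $\jrhobarpn(x)=v$, which simultaneously proves surjectivity and identifies $(\jrhobarpn)^{-1}$ with the claimed expression.

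The main obstacle is the membership $x\in\lpn(\mathbb{R})$. Using $\frac{1}{p_n-1}=p_n^*-1$ and $(p_n^*-1)p_n=p_n^*$, one computes $\rhopn(x)=\sum_n|x_n|^{p_n}=\sum_n|v_n|^{p_n^*}$, so it suffices to show $\sum_n|v_n|^{p_n^*}<+\infty$, i.e. $(v_n)_n\in\ell^{(p_n^*)}(\mathbb{R})$. This is exactly the (non-isometric) isomorphism $(\lpn)^*(\mathbb{R})\cong\ell^{(p_n^*)}(\mathbb{R})$ from \cite[Cor.~3.2.14]{LpvarBOOK}; alternatively it follows directly by testing $v$ on the truncations $h^{(N)}=\big(\sign(v_n)|v_n|^{p_n^*-1}\mathbbm{1}_{\{n\le N\}}\big)_n\in\lpn(\mathbb{R})$ and combining $|\langle v,h^{(N)}\rangle|\le\|v\|_{(\lpn)^*}\|h^{(N)}\|_{\lpn}$ with the standard modular--norm inequalities and the bounds $1<p_-\le p_n\le p_+<+\infty$. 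Finally, boundedness of the exponents turns finiteness of the modular into membership: for $\lambda\ge\max\{1,\rhopn(x)^{1/p_-}\}$ we get $\rhopn(x/\lambda)=\sum_n|x_n|^{p_n}\lambda^{-p_n}\le\lambda^{-p_-}\rhopn(x)\le1$, so $x\in\lpn(\mathbb{R})$.

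A shorter route uses convex duality: since $\jrhobarpn=\partial\rhobarpn$ in the spirit of Asplund's Theorem~\ref{Theo:Asplund}, $\lpn(\mathbb{R})$ is reflexive, and $\rhobarpn$ is proper, convex and lower semicontinuous, we have $(\jrhobarpn)^{-1}=(\partial\rhobarpn)^{-1}=\partial\big((\rhobarpn)^{\ast}\big)$, where $(\rhobarpn)^{\ast}$ denotes the Fenchel conjugate. Identifying $(\rhobarpn)^{\ast}(v)=\sum_n\frac{1}{p_n^*}|v_n|^{p_n^*}$ (the conjugate of a separable sum of powers) and differentiating coordinatewise recovers $\partial\big((\rhobarpn)^{\ast}\big)(v)=\big(\sign(v_n)|v_n|^{p_n^*-1}\big)_n=\big(\sign(v_n)|v_n|^{1/(p_n-1)}\big)_n$, in agreement with the statement.
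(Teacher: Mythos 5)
Your proof is correct, and its computational core --- inverting the scalar maps $\phi_n:t\mapsto\sign(t)|t|^{p_n-1}$ coordinate by coordinate --- is exactly the paper's argument: the paper's entire proof is the one-line componentwise verification that $|\phi_n(v_n)|^{\frac{1}{p_n-1}}\sign(\phi_n(v_n))=v_n$, i.e.\ the algebraic identity $\psi_n\circ\phi_n=\mathrm{id}$ on each coordinate, and it stops there. What you do differently is supply the functional-analytic bookkeeping the paper leaves implicit: injectivity via evaluation at the unit sequences, and, more substantially, surjectivity, by identifying an arbitrary $v\in(\lpn)^*(\mathbb{R})$ with its coordinate sequence (using density of finitely supported sequences, valid since $p_+<\infty$) and checking that the candidate preimage really lies in $\lpn(\mathbb{R})$ through the modular computation $\rhopn(x)=\sum_n|v_n|^{p_n^*}<\infty$ (via the non-isometric identification $(\lpn)^*(\mathbb{R})\cong\ell^{(p_n^*)}(\mathbb{R})$ or your truncation estimate) together with the bounded-exponent argument converting a finite modular into norm membership. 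This is a genuine strengthening: the paper's computation by itself only exhibits the stated formula as a pointwise inverse on the range of $\jrhobarpn$, whereas you establish that $\jrhobarpn$ is a bijection from $\lpn(\mathbb{R})$ onto $(\lpn)^*(\mathbb{R})$, which is what ``invertible'' asserts. Your alternative route via convex duality, $(\jrhobarpn)^{-1}=(\partial\rhobarpn)^{-1}=\partial\big((\rhobarpn)^{*}\big)$ on the reflexive space $\lpn(\mathbb{R})$ with the Fenchel conjugate computed coordinatewise, is a more structural argument not used in the paper; it yields the formula quickly but at the cost of invoking duality machinery and the identification of $(\rhobarpn)^{*}$. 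Both of your arguments are sound and consistent with the paper's claim.
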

\begin{proof}
    By straightforward componentwise computation, we have 
    \begin{align*}
        &|\jrhobarpn(v_n)|^{\frac{1}{p_n-1}}\sign(\jrhobarpn(v_n))=|\jrhobarpn (v_n)|^{\frac{1}{p_n-1}-1}\jrhobarpn (v_n)\\
        &=|\jrhobarpn (v_n)|^\frac{2-p_n}{p_n-1}\jrhobarpn (v_n)=| \, |v_n|^{p_n-1}\sign(v_n)|^\frac{2-p_n}{p_n-1}|v_n|^{p_n-1}\sign(v_n)=v_n \, .
    \end{align*}
\end{proof}
By the Proposition above, the update rule \eqref{eq:GD_var} of Algorithm \ref{alg_modular_gd}, can be rewritten as
\[
\xkk = (\jrhobarpn)^{-1}\Big( \jrhobarpn(\xk)-\mu_k\nabla f(\xk)\Big).
\]
As a consequence, whenever $\nabla f(x_k)=0$ at some $k\geq 0$, a stationary point $\xkk=(\jrhobarpn)^{-1}\Big( \jrhobarpn(\xk)\Big)=\xk$ is found, as expected.


The following convergence result is a special case of \cite[Proposition 3.4]{Lazzaretti_SISC22} providing an explicit convergence rate for the iterates of Algorithm \ref{alg_modular_gd}.

\begin{proposition}
\label{prop:convergencerate}
Let $x^* \in  \lpn(\mathbb{R})$ be a minimiser of $f$ and let $(\xk)_k $ be the sequence generated by Algorithm \ref{alg_modular_gd}. If $(\xk)$ is bounded, then:
\begin{equation}
\label{eq:conv_GS}
f(x^k) - f(x^*) \le  \frac{\eta}{k^{\textit{\texttt{p}}-1}}, 
\end{equation}
where $\texttt{p}>1$ is defined in assumption \ref{assump1} and $\eta=\eta(\bar{\mu}, \delta, p_{-}, x^0,x^*)$.
\end{proposition}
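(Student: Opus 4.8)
This statement is a specialisation of \cite[Proposition~3.4]{Lazzaretti_SISC22}, and the plan is to recover it from three ingredients: a sufficient-decrease inequality for $(f(\xk))_k$, a primal--dual comparison turning the decrease of the step length into a summable bound for the dual gradient norm, and an elementary decay lemma for scalar recursions of the form $a_k^{s}\le C(a_k-a_{k+1})$.
\emph{Step 1 (sufficient decrease).} First I would rewrite \eqref{eq:GD_var} as $\jrhobarpn(\xkk)=\jrhobarpn(\xk)-\mu_k\nabla f(\xk)$, legitimate because $(\jrhobarpn)^{-1}$ exists by the preceding proposition, so that $\mu_k\nabla f(\xk)=\jrhobarpn(\xk)-\jrhobarpn(\xkk)$. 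The descent lemma implied by the $(\texttt{p}-1)$-H\"older continuity of $\nabla f$ in \ref{assump1} gives $f(\xkk)\le f(\xk)+\langle\nabla f(\xk),\xkk-\xk\rangle+\tfrac{K}{\texttt{p}}\|\xkk-\xk\|_{\lpn}^{\texttt{p}}$, while \ref{assump2} yields $\langle\nabla f(\xk),\xkk-\xk\rangle=-\tfrac{1}{\mu_k}\langle\jrhobarpn(\xkk)-\jrhobarpn(\xk),\xkk-\xk\rangle\le-\tfrac{c}{\mu_k}\|\xkk-\xk\|_{\lpn}^{\texttt{p}}$. Combining, $f(\xk)-f(\xkk)\ge(\tfrac{c}{\mu_k}-\tfrac{K}{\texttt{p}})\|\xkk-\xk\|_{\lpn}^{\texttt{p}}$, and the step-size range forces $\tfrac{c}{\mu_k}-\tfrac{K}{\texttt{p}}\ge\tfrac{K\delta}{\texttt{p}(1-\delta)}=:\omega>0$. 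Hence $(f(\xk))_k$ is non-increasing and, setting $\Delta_k:=f(\xk)-f(x^*)\ge 0$, one gets $\omega\,\|\xkk-\xk\|_{\lpn}^{\texttt{p}}\le\Delta_k-\Delta_{k+1}$.

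\emph{Step 2 (primal--dual comparison).} Combining the lower bound in \ref{assump2} with the pairing estimate $\langle\jrhobarpn(u)-\jrhobarpn(v),u-v\rangle\le\|\jrhobarpn(u)-\jrhobarpn(v)\|_{(\lpn)^*}\,\|u-v\|_{\lpn}$ gives $\|\jrhobarpn(u)-\jrhobarpn(v)\|_{(\lpn)^*}\le c^{-(\texttt{p}-1)}\|u-v\|_{\lpn}^{\texttt{p}-1}$, where I use $1/(\texttt{p}^{*}-1)=\texttt{p}-1$. Applying this with $u=\xk$, $v=\xkk$, recalling $\mu_k\nabla f(\xk)=\jrhobarpn(\xk)-\jrhobarpn(\xkk)$ and $\mu_k\ge\bar\mu$, then raising to the power $\texttt{p}^{*}$ and invoking $(\texttt{p}-1)\texttt{p}^{*}=\texttt{p}$ together with Step~1, I obtain $\|\nabla f(\xk)\|_{(\lpn)^*}^{\texttt{p}^{*}}\le C_2\,(\Delta_k-\Delta_{k+1})$ for a constant $C_2=C_2(\bar\mu,\delta,c,K,\texttt{p})$.

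\emph{Step 3 (from the gradient to the gap, and the recursion).} Since $(\xk)$ is bounded by hypothesis, $R:=\sup_k\|\xk-x^*\|_{\lpn}<\infty$, and convexity of $f$ gives $\Delta_k\le\langle\nabla f(\xk),\xk-x^*\rangle\le R\,\|\nabla f(\xk)\|_{(\lpn)^*}$; with Step~2 this becomes $\Delta_k^{\texttt{p}^{*}}\le R^{\texttt{p}^{*}}C_2\,(\Delta_k-\Delta_{k+1})=:C\,(\Delta_k-\Delta_{k+1})$. A standard estimate of $\Delta_{k+1}^{1-\texttt{p}^{*}}-\Delta_k^{1-\texttt{p}^{*}}$ from below (convexity of $t\mapsto t^{1-\texttt{p}^{*}}$, together with $\Delta_k$ non-increasing, and treating the trivial case $\Delta_k=0$ separately) then yields $\Delta_k^{1-\texttt{p}^{*}}\ge\Delta_0^{1-\texttt{p}^{*}}+(\texttt{p}^{*}-1)k/C$, whence $\Delta_k\le\big(C/((\texttt{p}^{*}-1)k)\big)^{1/(\texttt{p}^{*}-1)}$; as $1/(\texttt{p}^{*}-1)=\texttt{p}-1$, this is precisely \eqref{eq:conv_GS}, with $\eta$ depending only on $C$ and $\texttt{p}$ — hence on $\bar\mu$, $\delta$ and the constants $c,K,\texttt{p}$ (which in the present setting are controlled by $p_-$), and, through $R$ and $\Delta_0$, on $x^0$ and $x^*$.

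\emph{Main obstacle.} The crux is Step~2: assumption \ref{assump2} must be used twice, in two opposite ``directions'', to convert the decrease of the \emph{primal} step $\|\xkk-\xk\|_{\lpn}$ into a summable bound for the \emph{dual} quantity $\|\nabla f(\xk)\|_{(\lpn)^*}^{\texttt{p}^{*}}$, and the exponent bookkeeping ($\texttt{p}-1=1/(\texttt{p}^{*}-1)$ and $(\texttt{p}-1)\texttt{p}^{*}=\texttt{p}$) together with the $k$-uniformity of all constants (secured exactly by $\bar\mu\le\mu_k\le\texttt{p}c(1-\delta)/K$) must match up; it is the two-sided compatibility in \ref{assump2} between the H\"older exponent of $\nabla f$ and the geometry of $\lpn(\mathbb{R})$ that makes both Step~1 and Step~2 close. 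The boundedness hypothesis on $(\xk)$ enters only in Step~3, to control $R$ and so keep $\eta$ from depending on the whole trajectory.
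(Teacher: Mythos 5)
Your proof is correct and follows essentially the same route as the proof the paper defers to (it gives no argument itself, citing \cite{Lazzaretti_SISC22}, whose Proposition 3.4 rests on the Guan--Song-type analysis \cite{GuanSong2015}): sufficient decrease via the H\"older descent lemma combined with \ref{assump2}, the dual gradient bound obtained by using \ref{assump2} in both directions, and the scalar recursion $\Delta_k^{\texttt{p}^*}\le C(\Delta_k-\Delta_{k+1})$ yielding the $k^{-(\texttt{p}-1)}$ rate. The only points worth flagging are minor: Step 3 silently uses convexity of $f$ (implicit in the paper's setting $f=\tilde f(A\cdot-y)$ with convex $\tilde f$ or modular), and your constant involves $R=\sup_k\|\xk-x^*\|_{\lpn}$, a trajectory quantity that the boundedness hypothesis makes legitimate but which matches the stated dependence $\eta(\bar\mu,\delta,p_-,x^0,x^*)$ only if one additionally argues (e.g.\ via monotonicity of the Bregman distance of $\rhobarpn$ to $x^*$) that the trajectory bound is controlled by $x^0$ and $x^*$.
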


Note that when the measurement space $\mathcal{Y}$ is a variable exponent Lebesgue space $\ell^{(q_n)}(\mathbb{R})$, a more effective and consistent choice for the objective function is the modular of the discrepancy between the model observation and the data, i.e. $f(x)=\Bar{\rho}_{(q_n)}(Ax-y)$. In this way, the heavy computations of the $\|\cdot\|_{\ell^{(q_n)}}$ norm and of its gradient are not required, making the iteration scheme faster. 

\vspace{-0.2cm}
\section{Stochastic modular-based gradient-descent in $\lpn(\mathbb{R})$}
\vspace{-0.15cm}
The key challenge for the viability of many deterministic iterative methods for real-world image reconstruction problems is their scalability to data-size.
For example, the highest per-iteration cost in emission tomography lies in the application of the entire forward operator at each iteration, whereas each image domain datum in computed tomography often requires several gigabytes of storage space. The same could thus be a bottleneck in the application of Algorithm \ref{alg_modular_gd}.
The stochastic gradient descent (SGD) paradigm addresses this issue \cite{Robbins1951}.

We partition the forward operator $A$, and the forward model into a finite number of block-type operators $A_1,\ldots,A_{N_s}$, where $N_s\in\mathbb{N}$ is the number of subsets of data.
The same partition is applied to the observations. 
Classical examples of this methodology include Kaczmarz methods in CT \cite{Herman1993,Needell2015}.
The SGD version of the iteration \eqref{eq:gd_banach} in Banach spaces 
takes the form
\begin{equation}\label{eq:sgd_banach}
\xkk=\JpXstar \left( \JpX(\xk)-\mu_{k+1}A_{i_k}^*\JqY(A_{i_k}\xk-y) \right),
\end{equation}
where the indices $i_k\in\left\{1,\ldots,N_s\right\}$ are sampled uniformly at random.
Sampling reduces the per-iteration computational cost in $\mathcal{Y}$ by a factor of $N_s$.
In  \cite{Kereta2023} convergence of the iterates to a minimum norm solution is shown.
\begin{theorem}\label{thm:as_lin_convergence}
Let $\sum_{k=1}^\infty\mu_{k}=+\infty$  and $\sum_{k=1}^\infty \mu_{k}^{p^\ast} <+\infty.$
Then 
\begin{align*}
    \mathbb{P}\Big(\lim_{k\rightarrow\infty} \inf_{\widetilde x\in \mathcal{X}_{\min}}\|{\xkk-\widetilde x}\|_{\mathcal X}=0\Big)=1.
\end{align*}
Let $\JpX(x_0)\in\overline{{\rm range}(A^\ast)}$ and let
$\mu_{k}^{p^\ast-1}\leq \frac{C}{L_{\max}^{p^\ast}}$ for all $k\geq 0$ and some constant $C>0$, where $L_{\max}=\max_i\|A_i\|$.
Then $\lim_{k\rightarrow\infty} \mathbb{E}[\|\xkk-x^\dag\|_{\mathcal{X}}]=0$ 
 $\lim_{k\rightarrow\infty} \mathbb{E}[\|\JpX(\xkk)-\JpX(x^\dag)\|^{p^\ast}]=0$.
\end{theorem}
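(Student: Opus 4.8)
Since the statement is quoted from \cite{Kereta2023}, I only outline the argument; it follows the standard template for (ordered-subsets) Landweber-type iterations in uniformly smooth, uniformly convex Banach spaces, transported to the present setting. The Lyapunov functional is the Bregman distance
\[
\mathsf D_{\xi}(z,x) := \tfrac1p\|z\|_{\mathcal X}^{p} - \tfrac1p\|x\|_{\mathcal X}^{p} - \langle \xi, z-x\rangle, \qquad \xi = \JpX(x),
\]
of the potential $\tfrac1p\|\cdot\|_{\mathcal X}^{p}$ whose subdifferential, by Asplund's Theorem \ref{Theo:Asplund}, is $\JpX$. The plan is to (i) derive a one-step estimate for $\mathsf D_{\JpX(\xk)}(\widetilde x,\xk)$ with $\widetilde x$ a solution; (ii) pass to conditional expectations and invoke the Robbins--Siegmund lemma; (iii) promote the resulting a.s.\ summability of residuals to a.s.\ convergence of $\xkk$ to $\mathcal X_{\min}$; and (iv) take full expectations and use the source condition $\JpX(x_0)\in\overline{{\rm range}(A^*)}$ for the rates in mean.

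\textbf{One-step estimate.} With $\xi_k := \JpX(\xk)$, iteration \eqref{eq:sgd_banach} is the dual recursion $\xi_{k+1} = \xi_k - \mu_{k+1} A_{i_k}^* \mathbf{J}^{q}_{\mathcal{Y}_{i_k}}(A_{i_k}\xk - y_{i_k})$, with $\xk = \JpXstar(\xi_k)$ by \eqref{pro:inverse_dual_map}. A direct computation with the Fenchel--Young identity gives, for any $z$,
\[
\mathsf D_{\xi_{k+1}}(z,\xkk) = \mathsf D_{\xi_k}(z,\xk) + \langle \xi_{k+1}-\xi_k,\, \xk - z\rangle + \mathsf D^*(\xi_{k+1},\xi_k),
\]
where $\mathsf D^*$ is the Bregman distance of $\tfrac1{p^*}\|\cdot\|_{\mathcal X^*}^{p^*}$. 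Taking $z=\widetilde x$ a solution (so $A_{i_k}\widetilde x = y_{i_k}$), the middle term equals $-\mu_{k+1}\langle \mathbf{J}^{q}_{\mathcal{Y}_{i_k}}(r_k), r_k\rangle = -\mu_{k+1}\|r_k\|_{\mathcal Y_{i_k}}^{q}$, $r_k := A_{i_k}\xk - y_{i_k}$, by the defining identity of the duality map; the remainder $\mathsf D^*(\xi_{k+1},\xi_k)$ is bounded, via uniform smoothness of $\mathcal X$ (a Xu--Roach $p^*$-power inequality in $\mathcal X^*$), by $c_{p^*}\mu_{k+1}^{p^*}\|A_{i_k}\|^{p^*}\|r_k\|_{\mathcal Y_{i_k}}^{(q-1)p^*}$. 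Combining these, and using the step-size restriction $\mu_{k+1}^{p^*-1}\le C L_{\max}^{-p^*}$ to keep the descent term dominant, one obtains a basic estimate of the form
\[
\mathsf D_{\xi_{k+1}}(\widetilde x,\xkk) \le \mathsf D_{\xi_k}(\widetilde x,\xk) - c_1\,\mu_{k+1}\,\|r_k\|_{\mathcal Y_{i_k}}^{q} + c_2\,\mu_{k+1}^{p^*}.
\]

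\textbf{From the estimate to convergence.} Conditioning on the uniformly sampled index $i_k$ turns $\|r_k\|_{\mathcal Y_{i_k}}^{q}$ into $\tfrac1{N_s}\|A\xk - y\|_{\mathcal Y}^{q}$, so $(\mathsf D_{\xi_k}(\widetilde x,\xk))_k$ satisfies a nonnegative almost-supermartingale inequality with additive term $c_2\mu_{k+1}^{p^*}$. Since $\sum_k\mu_k^{p^*}<\infty$, the Robbins--Siegmund lemma yields a.s.\ convergence of $\mathsf D_{\xi_k}(\widetilde x,\xk)$ and $\sum_k\mu_{k+1}\|A\xk - y\|_{\mathcal Y}^{q}<\infty$ a.s.; as $\sum_k\mu_k = +\infty$, the residual $\|A\xk - y\|_{\mathcal Y}$ vanishes along a subsequence a.s. Convergence of the Bregman distances to \emph{every} $\widetilde x\in\mathcal X_{\min}$ makes $(\xk)$ bounded by uniform convexity of $\mathcal X$; a weakly convergent subsequence along which the residual tends to $0$ has its limit in $\mathcal X_{\min}$, and — using that $\JpX(x_0)\in\overline{{\rm range}(A^*)}$ keeps every $\xi_k - \JpX(\widetilde x)$ in $\overline{{\rm range}(A^*)}$ — a monotonicity argument upgrades this to $\inf_{\widetilde x\in\mathcal X_{\min}}\|\xkk - \widetilde x\|_{\mathcal X}\to 0$ a.s., giving the first claim. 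For the mean statement, full expectations with $\widetilde x=x^\dag$ show that $k\mapsto\mathbb E[\mathsf D_{\xi_k}(x^\dag,\xk)]$ is, up to a summable perturbation, nonincreasing and that $\sum_k\mu_{k+1}\,\mathbb E[\|A\xk - y\|_{\mathcal Y}^{q}]<\infty$, hence $\mathbb E[\|A\xk - y\|_{\mathcal Y}]\to 0$ along a subsequence; the source condition keeps $\xi_k - \JpX(x^\dag)$ in $\overline{{\rm range}(A^*)}$, and a standard argument for Landweber-type schemes relating the Bregman distance to the residual then upgrades this to $\mathbb E[\mathsf D_{\xi_k}(x^\dag,\xk)]\to 0$, first along that subsequence and — by monotonicity — for the whole sequence. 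Finally, uniform convexity of $\mathcal X$, and via the identity $\mathsf D_{\xi_k}(x^\dag,\xk)=\mathsf D^*(\xi_k,\JpX(x^\dag))$ also of $\tfrac1{p^*}\|\cdot\|_{\mathcal X^*}^{p^*}$, bound $\|\xk - x^\dag\|_{\mathcal X}$ and $\|\JpX(\xk) - \JpX(x^\dag)\|_{\mathcal X^*}^{p^*}$ above by a power of $\mathsf D_{\xi_k}(x^\dag,\xk)\to0$; Jensen's inequality then delivers $\mathbb E[\|\xkk - x^\dag\|_{\mathcal X}]\to 0$ and $\mathbb E[\|\JpX(\xkk) - \JpX(x^\dag)\|^{p^*}]\to 0$.

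\textbf{Main obstacle.} The delicate point is the geometric estimate for the dual remainder $\mathsf D^*(\xi_{k+1},\xi_k)$: bounding it by a $p^*$-power of the dual increment with a constant uniform in $k$ is exactly where the smoothness/convexity hypotheses on the ambient space enter, and the relevant Xu--Roach-type inequality is more subtle here than in the constant-exponent $\ell^p$ case. A second, more structural difficulty is the a.s.\ statement: passing from subsequential vanishing of the residual to convergence of the \emph{entire} sequence to the solution set $\mathcal X_{\min}$ requires a careful weak-to-strong argument exploiting that all dual iterates remain in $\overline{{\rm range}(A^*)}$ up to the initial shift $\JpX(x_0)$, together with uniform convexity of $\mathcal X$.
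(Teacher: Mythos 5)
The paper does not actually prove this theorem: it is imported verbatim from the cited work \cite{Kereta2023} (``In \cite{Kereta2023} convergence of the iterates to a minimum norm solution is shown''), so there is no in-paper argument to compare against. Your outline does follow the same template as that source — dual iterates $\xi_k=\JpX(\xk)$, the Bregman distance of $\tfrac1p\|\cdot\|_{\mathcal X}^p$ as Lyapunov functional, a Xu--Roach-type one-step estimate, conditioning plus Robbins--Siegmund for the almost-sure claim, and the range condition $\JpX(x_0)\in\overline{\mathrm{range}(A^*)}$ for the statements in expectation — so at the level of strategy it is the right proof.

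Three points in your sketch need repair. First, the geometric duality is stated backwards: the upper bound $\mathsf D^*(\xi_{k+1},\xi_k)\lesssim\|\xi_{k+1}-\xi_k\|_{\mathcal X^*}^{p^*}$ is a $p^*$-smoothness (Xu--Roach upper) inequality in $\mathcal X^*$, which is equivalent to $p$-convexity of $\mathcal X$, not to uniform smoothness of $\mathcal X$; uniform smoothness of $\mathcal X$ only gives convexity of $\mathcal X^*$, i.e.\ a lower bound on $\mathsf D^*$, which is useless here (you do, correctly, use $p$-convexity of $\mathcal X$ later to pass from Bregman distance to norm). Second, your remainder bound is $c_{p^*}\mu_{k+1}^{p^*}\|A_{i_k}\|^{p^*}\|r_k\|^{(q-1)p^*}$, yet the claimed recursion has the residual-free additive term $c_2\mu_{k+1}^{p^*}$; getting from one to the other requires a Young-type splitting absorbing part of the remainder into the descent term $-\mu_{k+1}\|r_k\|^q$ (or an a priori residual bound), and this is exactly where the exponent bookkeeping and the step-size restriction matter — it should not be waved through. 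Third, you invoke the source condition $\JpX(x_0)\in\overline{\mathrm{range}(A^*)}$ inside the proof of the first, almost-sure claim, but the theorem does not assume it there: convergence of $\inf_{\widetilde x\in\mathcal X_{\min}}\|\xkk-\widetilde x\|_{\mathcal X}$ to zero a.s.\ must be obtained without it (e.g.\ by running the supermartingale argument simultaneously over a countable subset of $\mathcal X_{\min}$, then using subsequential residual vanishing and $p$-convexity); the range condition should enter only in the second part, where it identifies the limit as the minimum-norm solution $x^\dag$ and yields the convergence of $\mathbb E[\|\JpX(\xkk)-\JpX(x^\dag)\|^{p^*}]$.
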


For noisy measurements, the regularising property of SGD  should be established by defining suitable stopping criteria.
However, robust stopping strategies are hard to use in practice and having methods that are less sensitive to data overfit is crucial for their practical use.  Note that \eqref{eq:sgd_banach} is the standard form of SGD for separable objectives. Namely, for $f(x)=\|Ax-y\|_q^q$, we can choose $f_i(x;A,y)=\|A_ix-y_i\|_q^q$, so that $f(x)=\sum_{i=1}^{N_s}f_i(x)$. By Theorem \ref{Theo:Asplund}, this decomposition shows that each step of \eqref{eq:sgd_banach} can thus be computed by simply taking a sub-differential of a single sum-function $f_i$.

To define a suitable SGD in variable exponent Lebesgue spaces, we  take as objective function $f(x)=\bar{\rho}_{(q_n)}(Ax-y)$ and split it into $N_s\ge1$ sub-objectives $f_i(x):=\bar{\rho}_{(q_n^i)}(A_ix-y_i)$, so that $\nabla f_i(x)=A_i^\ast\mathbf{J}_{\Bar{\rho}_{(q_n^i)}}(A_ix-y_i)$.  
\red{Exponents} $(q_n^i)_n$ are obtained through the same partition of the exponents $(q_n)_n$ as the one used to split up the data. Then, at iteration $k$ \red{and a} randomly sampled index $1\le i_k\le N_s$, the corresponding stochastic iterates are given by
\begin{equation*}
    \xkk = \argmin_{u\in\lpn(\mathbb{R})} ~\rhobarpn(u)-\langle \jrhobarpn(\xk), u\rangle + \mu_k \langle \nabla f_{i_k}(\xk),u\rangle.
\end{equation*}
The pseudocode of the resulting stochastic modular-based gradient descent in $\lpn(\R)$ is reported in Algorithm \ref{alg_modular_sgd}.
We expect that through minimal modifications an  analogous convegence result as Theorem \ref{thm:as_lin_convergence} can be proved in this setting too. A detailed convergence proof, however, is left for future research.

\begin{algorithm}[t!]
\caption{Stochastic Modular-based Gradient Descent in $\lpn(\mathbb{R})$}
\small{
\textbf{Parameters:} $\mu_0$ s.t. $
0<\Bar{\mu}\le \mu_0 \le \frac{\textit{\texttt{p}}c(1-\delta)}{K}$, $0<\delta<1$, $N_s\ge 1$, $\gamma>0$, $\eta>0$.\\
\textbf{Initialisation:}  $x^0\in\lpn(\mathbb{R})$.\\

\textbf{repeat}
\begin{itemize}
        \item[] Select uniformly at random $i_k\in\{1,\cdots,N_s\}$.
        \item[] Set
        $\mu_k=\frac{\mu_0}{1+\eta (k/N_s) ^ \gamma}$
        \item[] Compute
        \begin{equation}
        \footnotesize{
    \xkk = \vert \jrhobarpn(\xk)-\mu_k
    \nabla f_{i_k}(\xk)
    \vert ^{\frac{1}{p_n-1}}\sign{(\jrhobarpn(\xk)-\mu_k 
    \nabla f_{i_k}(\xk) )}
    }
    \label{eq:SGD_var}
\end{equation}
\end{itemize}
\textbf{until} convergence
}\label{alg_modular_sgd}
\end{algorithm}


\vspace{-0.2cm}
\section{Numerical results}
\vspace{-0.15cm}
We now present experimental results of the proposed Algorithm \ref{alg_modular_sgd} on two exemplar problems in computed tomography (CT). 
The first set of experiments consider a simulated setting for quantitatively comparing the performance of Algorithm 
\ref{alg_modular_sgd} with the corresponding Hilbert and Banach space versions \eqref{eq:sgd_banach}.
In the second set of experiments we consider the dataset of real-world CT scans of a walnut \red{taken from} \url{doi:10.5281/zenodo.4279549}, with a fan beam geometry.
For these data, we utilise the insights from the first set of experiments and apply Algorithm \ref{alg_modular_sgd} in a setting with different noise modalities across the sinogram space.
%
The experiments were conducted in \texttt{python}, using the open source  package \cite{CIL2021} for the tomographic backend.

\vspace{-0.25cm}

\paragraph{Hyper-parameter selection.} In the following experiments, we employ a decaying stepsize regime such that it satisfies the conditions of Theorem \ref{thm:as_lin_convergence} for the convergence of Banach space SGD, cf. \cite{Kereta2023}. A need for a decaying stepsize regime is common for stochastic gradient descent to mitigate the effects of inter-iterate variance.
Specifically, we use $\mu_k=\frac{\mu_{0}}{1+c(k/N_s)^\gamma}$, where $\mu_0>0$ is the initial stepsize, and $\gamma>0$ and $c>0$ control the decay speed. For the Hilbert space setting, $\mathbf{SGD}_2$, initial stepsize $\mu_0$ is given by the Lipschitz constant of the gradient of the objective function, namely $\mu_0=0.95/\max_{i}\|A_i\|^2$. For $\mathbf{SGD_p}$ and $\mathbf{SGD_{p_n,q_n}}$ the estimation of the respective H\"older continuity constant is more delicate and $\mu_0$ has to be tuned to guarantee convergence. However, its tuning is rather easy and the employ of a decaying strategy makes the choice of $\mu_0$ less critical.

As far as variable exponents are concerned, it is difficult (and somehow undesirable) to have a unified configuration as their selection is strictly problem-related. Parameters $(q_n)_n$ are related to the regularity of the measured sinograms as well as the different noise distributions considered.
For instance, when impulsive noise is considered, values of $q_-$ and $q_+$ closer to 1 are preferred while and for Gaussian noise values closer to $2$ are more effective. Solution space parameters $p_-$ and $p_+$ relate to the regularity of the solution to retrieve. As a consequence, their choice is intrinsically harder. We refer the reader to \cite{Bonino23}, where a comparison between different choices for $p_-$ and $p_+$ and different interpolation strategies is carried out for image deblurring with gradient descent \eqref{eq:gd_banach} in $\lpn$. 

\vspace{-0.25cm}
\paragraph{Simulated data.} We considered \eqref{eq:inv_prob} with $A$ given by the discrete Radon transform. For its definition we use a 2D parallel beam geometry, with 180 projection angles on a 1 angle separation, 256
detector elements, and pixel size of 0.1.
The synthetic phantom was provided by the CIL library, see Figure \ref{fig:phantom15}(b). 
After applying the forward operator, a high level (15\%) of salt-and-pepper noise is applied to the sinogram. The noisy sinogram is shown in Figure \ref{fig:phantom15}(a). 
\begin{figure}[t!]
\centering
\small
\setlength{\tabcolsep}{0pt}
\begin{tabular}{cccc}
{
\begin{tikzpicture}
\draw (0,0) node [anchor=south] {\includegraphics[height=0.22\textwidth]{{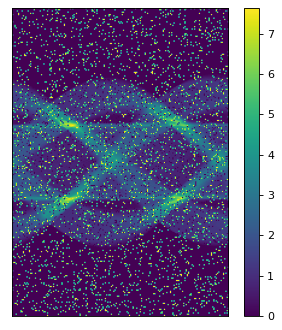}}};%
\end{tikzpicture}
\hspace*{-4mm}
}
&
\showpic{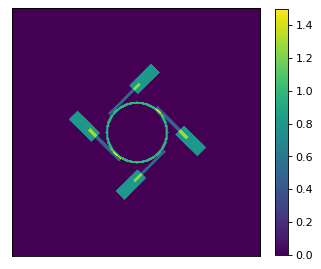}\hspace*{-4mm} &
{
\begin{tikzpicture}[spy using outlines={rectangle, magnification=2, width = 0.75cm, height = 0.75cm,
connect spies, red, thick}]%
\draw (0,0) node [anchor=north] {\includegraphics[height=0.22\textwidth]{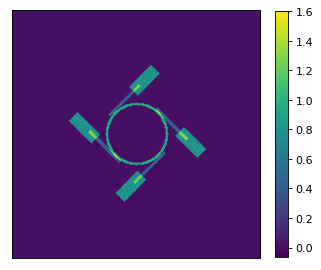}};%
\spy on (0.1cm,-1.4cm) in node at (0.61cm, -.6cm);
\end{tikzpicture}}
\hspace*{-4mm}
&
\begin{tikzpicture}[spy using outlines={rectangle, magnification=2, width = 0.75cm, height = 0.75cm,
connect spies, red, thick}]%
\draw (0,0) node [anchor=north] {\includegraphics[height=0.22\textwidth]{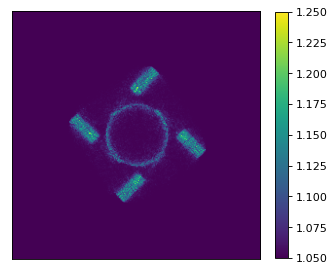} 
};%
\spy on (0.1cm,-1.4cm) in node at (0.54cm, -.61cm);
\end{tikzpicture}\hspace*{-4mm} \\
{\scriptsize{(a) Sinogram}} & {\scriptsize{(b) GT}} & {\scriptsize{(c) $\mathbf{SGD_{p_n,q_n}}$}} & {\scriptsize{(d) $(p_n)$ map}}
\end{tabular}
\caption{\footnotesize In (c) reconstruction of noisy sinogram (a) by $\mathbf{SGD_{p_n,q_n}}$, where $1.05= p_-\le(p_n)\le p_+=1.25$ is shown in (d) and $1.05=q_-\le(q_n)\le q_+=1.25$ is based on the model observation corresponding to $(p_n)$.}
\label{fig:phantom15}
\vspace{-0.25cm}
\end{figure}
\begin{figure}[t!]
\centering
\begin{subfigure}[b]{0.33\textwidth}
\includegraphics[height=3.5cm]{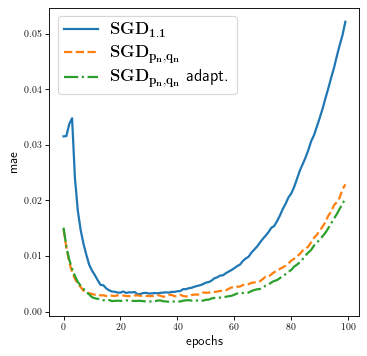}
\caption{MAE}
\label{fig:mae}
\end{subfigure}
\begin{subfigure}[b]{0.31\textwidth}
\includegraphics[height=3.5cm]{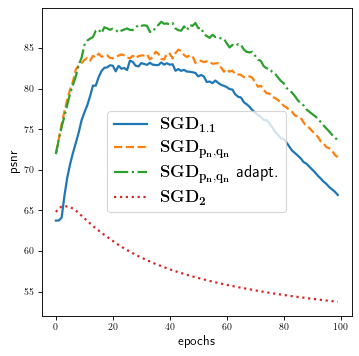}
\caption{PSNR}
\label{fig:psnr}
\end{subfigure}
\begin{subfigure}[b]{0.32\textwidth}
\includegraphics[height=3.5cm]{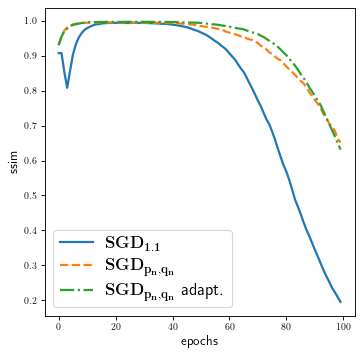}
\caption{SSIM}
\label{fig:ssim}
\end{subfigure}
\caption{\footnotesize Quality metrics along the first 100 epochs of $\mathbf{\text{SGD}_2}$; $\mathbf{\text{SGD}_{1.1}}$; $\mathbf{\text{SGD}_{p_n,q_n}}$ with and without adapting the exponent maps $(p_n)$. $\mathbf{\text{SGD}_2}$ is omitted from MAE and SSIM to improve the readability of the plots, due to its poor performance.}
\label{fig:phantom15_metrics}
\vspace{-0.15cm}
\end{figure}

\begin{table}[t]
\begin{center}
\begin{tabular}{c@{\hskip 0.2in}ccc@{\hskip 0.1in}cccccc}
\cmidrule[\heavyrulewidth]{2-10}
& \multicolumn{2}{c}{Deterministic} &      & \multicolumn{6}{c}{Stochastic ($\cdot=\mathbf{S}$)}                                            \\ \cmidrule{2-3} \cmidrule{5-10} 
Algorithm                   & It. & Tot. & & It.& Epoch & Tot.  & MAE & PSNR & SSIM \\ 
\cmidrule{1-3} \cmidrule{5-10} 
$\cdot\mathbf{GD_2}$                        & 0.44s         & 1324s & &0.02s         & 0.74s    & 74.4 s & 2.582{\rm e-}1   & 57.89  & 0.0304  \\
$\cdot\mathbf{GD_{1.1}}$                    & 0.43s         & 1297s & & 0.03s         & 0.81s    & 81.3s   & 3.671{\rm e-}3 & 82.64 & 0.9897   \\
$\cdot\mathbf{GD_{p_n,q_n}}$                     & 0.47s         & 1403s & & 0.03s         & 0.96s    & 96.5s  & 2.887{\rm e-}3 & 84.05 & 0.9927 \\
$\cdot\mathbf{GD_{p_n,q_n}}$ adapt.        & 0.44s         & 1317s & & 0.03s         & 0.91s    & 91.2s  &  1.777{\rm e-}3 & 88.10 & 0.9965  \\
Compute $(p_n),\ (q_n)$ & 0.45s         & 16s   & & 0.03s         & 0.8s      & 4.0s & - & - & -\\
\bottomrule
\end{tabular}
\end{center}
\caption{\footnotesize Comparison of per iteration cost and total CPU times after 3000 iterations for determistic algorithms and after 100 epochs for stochastic algorithm with $N_s=30$. MAE, PSNR and SSIM values for stochastic algorithms are computed after 40 epochs (before noise overfitting).
}
\label{tab:cputime}
\vspace{-0.7cm}
\end{table}
To compute subset data $A_i$ and $y_i$, the forward operator and the sinogram are pre-binned according to equally spaced views (w.r.t.~the number of subsets) of the scanner geometry. Subsequent subset data are offset from one another by the subset index $i$.
We consider $N_s=30$ batches. We compare results obtained by solving \eqref{eq:min_prob_general} by:
\begin{itemize}
    \item[] $\mathbf{{SGD}_{2}}$: $\mathcal{X}=\mathcal{Y}=\ell^2(\mathbb{R})$, $\mathcal{Y}=\ell^2(\mathbb{R})$, $f(x)=\frac{1}{2}\|Ax-y\|_2^2$ by SGD; 
    \item[] $\mathbf{{SGD}_p}$:  $\mathcal{X}=\mathcal{Y}=\ell^p(\mathbb{R})$, $p=1.1$, $f(x)=\frac{1}{p}\|Ax-y\|_p^p$ by Banach SGD  \eqref{eq:sgd_banach}; 
    \item[] $\mathbf{{SGD}_{p_n,q_n}}$: $\mathcal{X}=\lpn(\mathbb{R})$, $\mathcal{Y}=\ell^{(q_n)}(\R)$ for appropriately chosen exponent maps, $f(x)=\Bar{\rho}_{(q_n)}(Ax-y)$ with modular-based SGD Algorithm \ref{alg_modular_sgd}.
\end{itemize}
We considered step-sizes $\mu_k=\frac{\mu_{0}}{1+0.1(k/N_s)^\gamma}$, with $\mu_{0}$ and $\gamma$ which depend on the algorithm.\footnote{For $\mathbf{{SGD}_2}$ $\mu_0$ is set as $0.95/\max_{i}\|A_i\|^2$ and $\gamma=0.51$. For $\mathbf{{SGD}_p}$ and $\mathbf{{SGD}_{p_n,q_n}}$, we use $\mu_0=0.015$ with $\gamma=(p-1)/p+0.01$ and $\gamma=(p_--1)/p_-+0.01$ respectively.}
Spaces $\lpn(\mathbb{R})$ allow for variable exponent maps sensitive to local assumptions on both the solution and the measured data. 
A possible strategy for informed pixel-wise variable exponents consists in basing them on observed data (for $(q_n)$) and an approximation of the reconstruction (for $(p_n)$), as done in \cite{Bonino23,Lazzaretti_SISC22,Estatico2019}. 
To this end, we first compute an approximate reconstruction $\Tilde{x}\in\lpn(\mathbb{R})$ by running $\mathbf{{SGD}_p}$ in $\ell^{1.1}(\mathbb{R})$ for 5 epochs with a constant stepsize regime. 
The map $(p_n)$ is then computed via a linear interpolation of $\Tilde{x}$ between $p_{-}=1.05$ and $p_{+}=1.25$.  
The map $(q_n)$ is chosen as the linear interpolation between $q_{-}=1.05$ and $q_{+}=1.25$ of $A(p_n)$. 
The bounds $p_-,p_+$ and $q_-,q_+$ are chosen by prior assumptions on $y$ (sparse phantom) and on the noise observed (impulsive). 
%
We also tested an adaptive strategy by updating $(p_n)$ based on the current solution estimate once every $\beta_\text{updates}$ epochs to adapt the exponents along the iterations. 

In Figure \ref{fig:phantom15_metrics}, we report the mean absolute error (MAE), peak signal to noise ratio (PSNR) and structural similarity index (SSIM) of the iterates $\xk$ w.r.t.~the known ground-truth phantom along the first 100 epochs. 
Since PSNR favours smoothness, it is thus beneficial for $\mathbf{{SGD}_2}$, whereas MAE promotes sparsity hence is beneficial for both $\mathbf{{SGD}_p}$ and $\mathbf{{SGD}_{p_n,q_n}}$.
Figure \ref{fig:psnr} shows that  Banach space algorithms provide better performance than $\mathbf{{SGD}_2}$ in all three quality metrics. 
Note that all the results show the well-known semi-convergence behaviour with respect to the metrics considered.
To avoid such behaviour an explicit regulariser or a sound early stopping criterion would be beneficial for reconstruction performance. 
We observe that the use variable exponents does not only improve all quality metrics, but also makes the algorithm more stable: the quality of the reconstructed solutions is significantly less sensitive to the number of epochs,  making possible early stopping strategies more robust. 

In Table \ref{tab:cputime}, the CPU times for deterministic ($\mathbf{GD_2}$, $\mathbf{GD_p}$ and $\mathbf{GD_{p_n,q_n}}$) approaches and stochastic ones ($\mathbf{SGD_2}$, $\mathbf{SGD_p}$ and $\mathbf{SGD_{p_n,q_n}}$) are compared.

\vspace{-0.2cm}
\paragraph{Real CT datasets: walnut.}
 We consider a cone beam CT dataset of a walnut \cite{Meaney2020}, from which we take a 2D fan beam sinograms from the centre plane of the cone. The cone beam data uses $0.5$ angle separation over the range $[0,360]$. The used sinogram is obtained by pre-binning the raw data by a factor of $8$, resulting in $280$ effective detector pixels.
The measurements have been post-processed for dark current and flat-field compensation.
As stepsize we used $\mu_k=\frac{\mu_{0}}{1+0.001(k/N_s)^\gamma}$, with $N_s=10$ subsets, and suitable $\mu_{0}$ and $\gamma$. \footnote{For $\mathbf{SGD}_2$ , $\mu_0=0.95/\max_{i}\|A_i\|^2$, $\gamma=0.51$. For $\mathbf{SGD}_{p_n,q_n}$ we $\mu_0=0.001$, $\gamma=0.58$.} 
Initial images are computed by $5$ epochs of $\mathbf{SGD_{1.4}}$  with a constant stepsize.
\begin{figure}[t!]
\centering
\small
\setlength{\tabcolsep}{0pt}
\begin{tabular}{cccc}
\begin{tikzpicture}[spy using outlines={rectangle, magnification=3, width = 0.75cm, height = 0.75cm,
connect spies, red, thick}]%
\draw (0,0) node [anchor=north] {\includegraphics[height=0.24\textwidth]{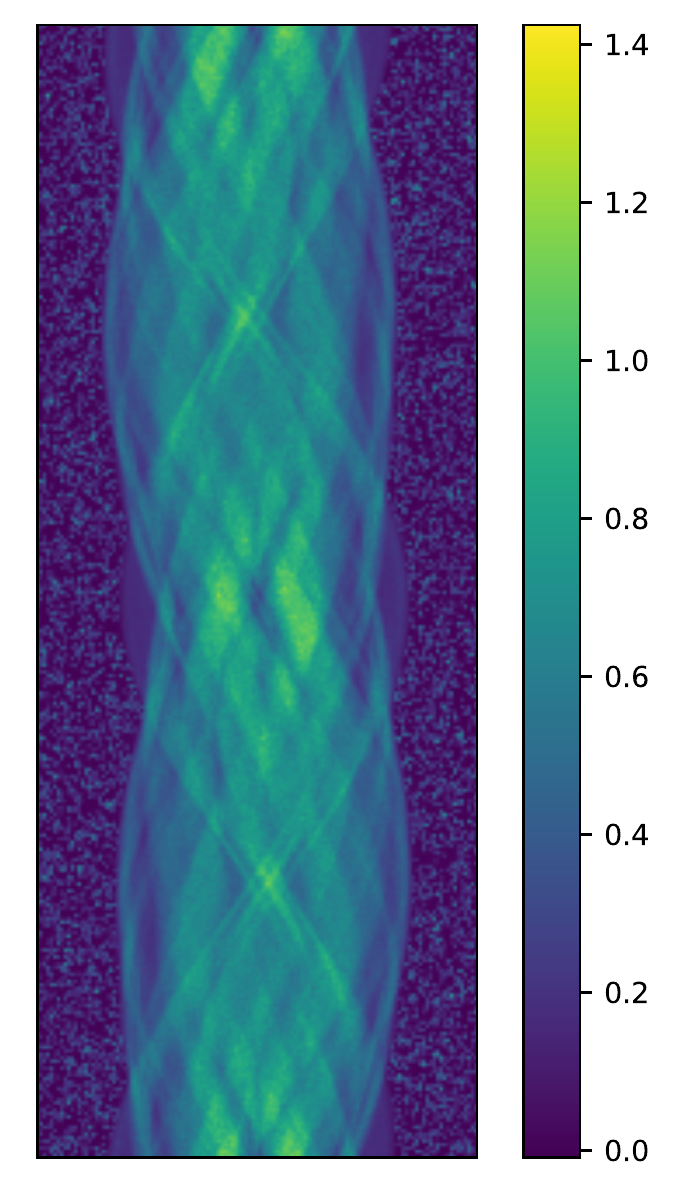} 
};%
\end{tikzpicture}\hspace*{-3mm}
&
\begin{tikzpicture}[spy using outlines={rectangle, magnification=3, width = 0.75cm, height = 0.75cm,
connect spies, red, thick}]%
\draw (0,0) node [anchor=north] {\includegraphics[height=0.24\textwidth]{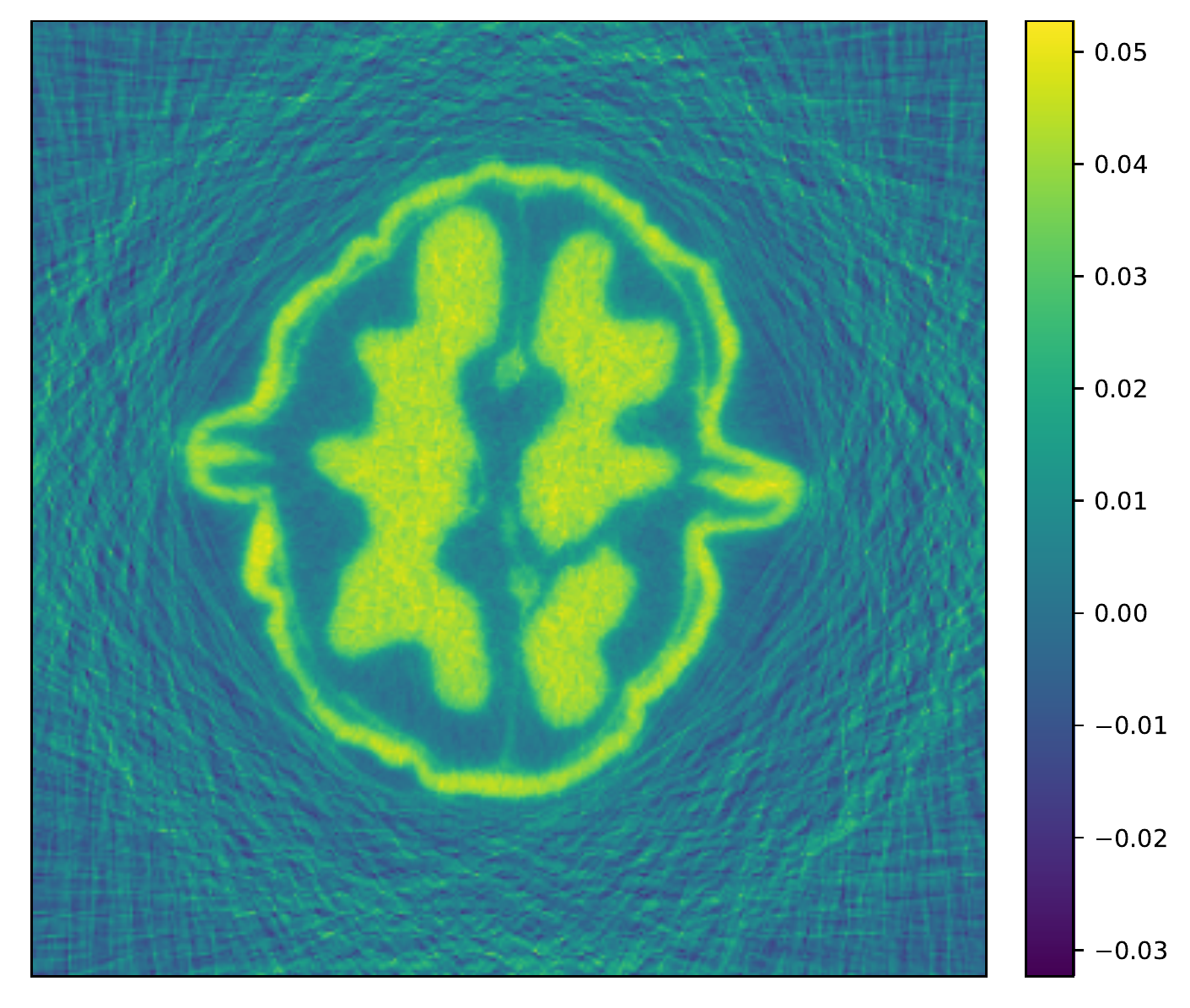} 
};%
\spy on (-0.4cm,-.65cm) in node at (0.75cm, -.55cm);
\end{tikzpicture}\hspace*{-3mm}
&
\begin{tikzpicture}[spy using outlines={rectangle, magnification=3, width = 0.75cm, height = 0.75cm,
connect spies, red, thick}]%
\draw (0,0) node [anchor=north] {\includegraphics[height=0.24\textwidth]{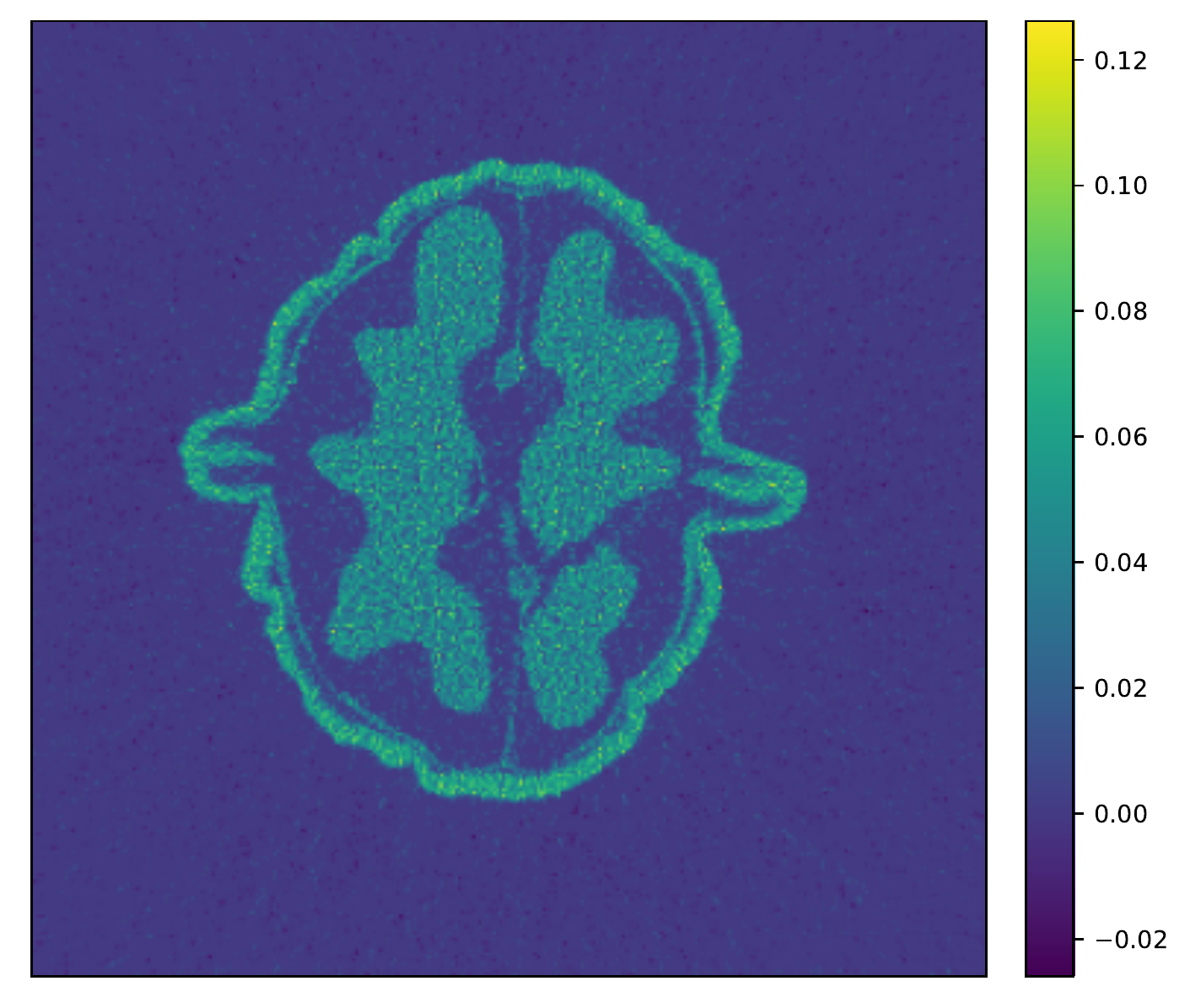} 
};%
\spy on (-0.4cm,-.66cm) in node at (0.75cm, -.56cm);
\end{tikzpicture}\hspace*{-3mm} &
\begin{tikzpicture}[spy using outlines={rectangle, magnification=3, width = 0.75cm, height = 0.75cm,
connect spies, red, thick}]%
\draw (0,0) node [anchor=north] {\includegraphics[height=0.24\textwidth]{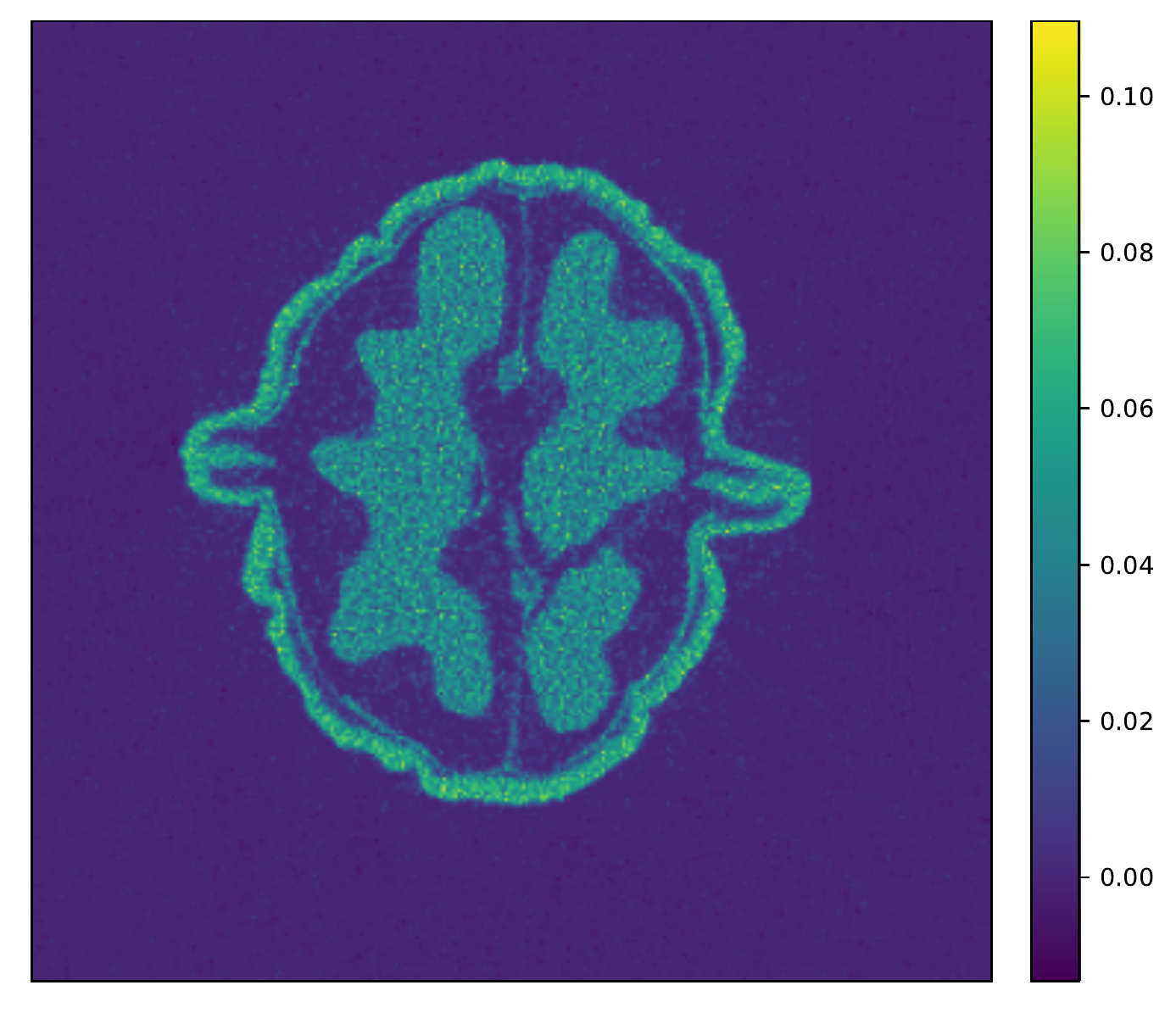}  
};%
\spy on (-0.37cm,-.66cm) in node at (0.78cm, -.56cm);
\end{tikzpicture}\hspace*{-3mm}
\\
{\scriptsize{(a) Sinogram}} & {\scriptsize{(b) SGD}} & {\scriptsize{(c) Constant exponents}} & {\scriptsize{(d) Variable exponents}}
\end{tabular}
\caption{\footnotesize{(a) Noisy sinogram with $10\%$ salt \& pepper (background) and speckle noise with $0$ mean and variance $0.01$ (foreground). (b) $\mathbf{SGD_2}$ result. (c) $\mathbf{SGD_{p_n,1.1}}$ result (d) $\mathbf{SGD_{p_n,q_n}}$ result. $p_-=1.2$, $p_+=1.3$, $q_{-}=1.1$ and $q_{+}=1.9$.}}
\label{fig:walnut_snpbackspecklefore_hilbert}
\vspace{-0.25cm}
\end{figure}
We consider a more delicate noise setting that requires exponential maps which vary in the acquisition domain.
Here, we assume that noise has a different effect on the background (zero entries) and the foreground (non-zero entries) of the clean sinogram.
Namely, we apply $10\%$ salt and pepper noise to the background, and speckle noise with mean $0$ and variance $0.01$ to the foreground, cf. Fig. \ref{fig:walnut_snpbackspecklefore_hilbert}(a) for the resulting noisy sinogram.
Notably, since this noise model has a non-uniform effect across the measurement data, Banach space methods favouring the adjustment of the Lebesgue exponents are expected to perform better than those making use of a constant value. 
Taking as a reference the result obtained by $\mathbf{SGD_2}$ (Fig. \ref{fig:walnut_snpbackspecklefore_hilbert}(b)), we compare here the effect of allowing variable exponents in the solution space only with the effect of allowing both maps $(p_n)$ and $(q_n)$ to be chosen.
By choosing $(p_n)$ based on the initial image and interpolating it between $p_{-}=1.2$ and $p_{+}=1.3$ we then compare $\mathbf{SGD_{p_n,1.1}}$ (i.e., fixed exponent $q=1.1$ in the measurement space), cf.~ Fig. \ref{fig:walnut_snpbackspecklefore_hilbert}(c), with $\mathbf{SGD_{p_n,q_n}}$ where $(p_n)$ is as before while $(q_n)$ is chosen from the sinogram by interpolating between $q_{-}=1.1$ and $q_{+}=1.9$, cf.~Fig. \ref{fig:walnut_snpbackspecklefore_hilbert}(d).
The results show that a flexible framework where both maps $(p_n)$ and $(q_n)$ adapt to local contents are more suited for dealing with this challenging scenario.

\vspace{-0.25cm}
\section{Conclusions}
\vspace{-0.15cm}
We proposed a stochastic gradient descent algorithm for solving linear inverse problems in 
 $\lpn(\mathbb{R})$. After recalling its deterministic counterpart and the difficulties encountered due to the non-separability of the underlying norm, a modular-based stochastic algorithm enjoying fast scalability properties is proposed. Numerical results show improved performance in comparison to standard $\ell^2(\mathbb{R})$ and $\ell^p(\mathbb{R})$-based algorithms and significant computational gains. Future work should adapt the convergence result (Theorem \ref{alg_modular_sgd}) to this setting and consider proximal extensions for incorporating non-smooth regularisation terms.

 \vspace{-0.25cm}
 {\small \section{Acknowledgements}
 \vspace{-0.15cm}
CE and ML acknowledge the support of the Italian INdAM group on scientific calculus GNCS. LC acknowledges the support received by the ANR projects TASKABILE (ANR-22-CE48-0010) and MICROBLIND (ANR-21-CE48-0008), the H2020 RISE projects NoMADS (GA. 777826) and the GdR ISIS project SPLIN. ZK acknowledges support from EPSRC grants EP/T000864/1 and EP/X010740/1.
}
\vspace{-0.2cm}
\bibliographystyle{plain}
\bibliography{ref.bib}
\end{document}